\newtheorem{theorem}{Theorem}[section]
\newtheorem{proposition}[theorem]{Proposition}
\newtheorem{lemma}[theorem]{Lemma}
\newtheorem{remark}{Remark}
\newcommand{\al}{\alpha}
\newcommand{\bt}{\beta}
\newcommand{\s}{\sigma}
\newcommand{\be}{\begin{equation}}
\newcommand{\ee}{\end{equation}}
\newcommand{\bea}{\begin{eqnarray}}
\newcommand{\eea}{\end{eqnarray}}
\numberwithin{equation}{section}
\begin{document}

\title{Semi-classical Jacobi Polynomials, Hankel Determinants and Asymptotics}
\author{Chao Min\thanks{School of Mathematical Sciences, Huaqiao University, Quanzhou 362021, China; e-mail: chaomin@hqu.edu.cn}\: and Yang Chen\thanks{Department of Mathematics, Faculty of Science and Technology, University of Macau, Macau, China; e-mail: yangbrookchen@yahoo.co.uk}}


\date{\today}
\maketitle
\begin{abstract}
We study orthogonal polynomials and Hankel determinants generated by a symmetric semi-classical Jacobi weight.
By using the ladder operator technique, we derive the second-order nonlinear difference equations satisfied by the recurrence coefficient $\bt_n(t)$ and the sub-leading coefficient $\mathrm{p}(n,t)$ of the monic orthogonal polynomials. This enables us to obtain the large $n$ asymptotics of $\bt_n(t)$ and $\mathrm{p}(n,t)$ based on the result of Kuijlaars et al. [Adv. Math. \textbf{188} (2004) 337-398]. In addition, we show the second-order differential equation satisfied by the orthogonal polynomials, with all the coefficients expressed in terms of $\bt_n(t)$.

From the $t$ evolution of the auxiliary quantities, we prove that $\bt_n(t)$ satisfies a second-order differential equation and $R_n(t)=2n+1+2\al-2t(\bt_n(t)+\bt_{n+1}(t))$ satisfies a particular Painlev\'{e} V equation under a simple transformation. Furthermore, we show that the logarithmic derivative of the associated Hankel determinant satisfies both the second-order differential and difference equations. The large $n$ asymptotics of the Hankel determinant is derived from its integral representation in terms of $\bt_n(t)$ and $\mathrm{p}(n,t)$.
\end{abstract}

$\mathbf{Keywords}$: Semi-classical Jacobi polynomials; Hankel determinants; Ladder operators;

Painlev\'{e} V; Differential and difference equations; Asymptotic expansions.

$\mathbf{Mathematics\:\: Subject\:\: Classification\:\: 2020}$: 42C05, 41A60, 33E17.

\section{Introduction}
The relationship between semi-classical orthogonal polynomials and Painlev\'{e} equations has been studied extensively over the past few years. For example, Chen and Its \cite{ChenIts} considered the singularly perturbed Laguerre weight $w(x)=x^{\al}\mathrm{e}^{-x-s/x},\;x\geq 0, \al>0, s>0.$ They showed that the diagonal recurrence coefficient of the monic orthogonal polynomials satisfies a particular Painlev\'{e} III, and the logarithmic derivative of the associated Hankel determinant satisfies the Jimbo-Miwa-Okamoto (JMO) $\s$-form of the Painlev\'{e} III. Filipuk, Van Assche and Zhang \cite{Filipuk} investigated another semi-classical Laguerre weight $w(x)=x^{\al}\mathrm{e}^{-x^2+tx},\;x\in\mathbb{R}^{+}, \al>-1, t\in \mathbb{R}$, to establish the relation between the recurrence coefficients of the orthonormal polynomials and the Painlev\'{e} IV. Later, Clarkson and Jordaan \cite{Clarkson} proved that the logarithmic derivative of the associated Hankel determinant satisfies the JMO $\s$-form of the Painlev\'{e} IV. See also \cite{BCE,Clarkson3,Dai,Min2020b,MLC} for more information.

In this paper, we are concerned with the symmetric semi-classical Jacobi weight
\be\label{wei}
w(x,t):=(1-x^2)^\al\mathrm{e}^{-tx^2},\qquad x\in[-1,1],\;\;\al>-1,\;\;t\in \mathbb{R}.
\ee
Let $P_{n}(x,t),\; n=0,1,2,\ldots,$ be the monic polynomials of degree $n$ orthogonal with respect to the weight (\ref{wei}), i.e.,
\be\label{or}
\int_{-1}^{1}P_{m}(x,t)P_{n}(x,t)w(x,t)dx=h_{n}(t)\delta_{mn},\qquad m, n=0,1,2,\ldots.
\ee
Since the weight $w(x,t)$ is even, $P_{n}(x,t)$ has the following monomial expansion \cite[p. 21]{Chihara},
\be\label{expan}
P_{n}(x,t)=x^{n}+\mathrm{p}(n,t)x^{n-2}+\cdots,\qquad n=0,1,2,\ldots,
\ee
where $\mathrm{p}(n,t)$ denotes the coefficient of $x^{n-2}$, and the initial values of $\mathrm{p}(n,t)$ are defined to be $\mathrm{p}(0,t)=0,\; \mathrm{p}(1,t)=0$.

The orthogonal polynomials $P_{n}(x,t),\; n=0,1,2,\ldots,$ satisfy the following three-term recurrence relation \cite[p. 18-21]{Chihara}
\be\label{rr}
xP_{n}(x,t)=P_{n+1}(x,t)+\beta_{n}(t)P_{n-1}(x,t),
\ee
with the initial conditions
$$
P_{0}(x,t)=1,\;\;\;\;P_{-1}(x,t)=0.
$$

The combination of (\ref{or}), (\ref{expan}) and (\ref{rr}) shows that the recurrence coefficient $\bt_n(t)$ has two alternative representations:
\bea
\beta_{n}(t)&=&\mathrm{p}(n,t)-\mathrm{p}(n+1,t)\label{be1}\\
&=&\frac{h_{n}(t)}{h_{n-1}(t)}.\label{be2}
\eea
Taking a telescopic sum of (\ref{be1}), we get an important identity
\be\label{sum}
\sum_{j=0}^{n-1}\beta_{j}(t)=-\mathrm{p}(n,t).
\ee
From (\ref{be1}) we also have $\bt_0(t)=0$.

We introduce the Hankel determinant generated by the weight (\ref{wei}),
$$
D_n(t)=\det\left(\mu_{j+k}(t)\right)_{j,k=0}^{n-1},
$$
where $\mu_{j}(t),\;j=0,1,2,\ldots$ are the moments given by
$$
\mu_{j}(t)=\int_{-1}^{1}x^{j}w(x,t)dx.
$$
The moments $\mu_{j}(t)$ can be evaluated explicitly:
\be\label{mom}
\mu_{j}(t)=\frac{[1+(-1)^j]\:\Gamma(1+\al)\:\Gamma\big(\frac{j+1}{2}\big)\:\Phi\big(\frac{j+1}{2},\frac{j+3}{2}+\al;-t\big)}{2\:\Gamma\big(\frac{j+3}{2}+\al\big)},\qquad j=0,1,2,\ldots,
\ee
where $\Gamma(\cdot)$ is the gamma function defined by $\Gamma(z)=\int_{0}^{\infty}\mathrm{e}^{-x}x^{z-1}dx,\;\mathrm{Re}\; z>0$, and $\Phi(\cdot, \cdot; \cdot)$ is the Kummer's confluent hypergeometric function defined by \cite[p. 260]{Lebedev}
$$
\Phi(a, b; z)=\sum_{k=0}^{\infty}\frac{(a)_{k}}{(b)_{k}}\frac{z^{k}}{k!},\qquad |z|<\infty,\quad b\neq 0, -1, -2,\ldots
$$
and the symbol $(a)_{k}$ denotes the quantity
$$
(a)_{0}=1,\qquad (a)_{k}=\frac{\Gamma(a+k)}{\Gamma(a)}=a(a+1)\cdots(a+k-1),\quad k=1,2,\ldots.
$$
Furthermore, $\Phi(a, b; z)$ has an integral representation \cite[p. 266]{Lebedev}
$$
\Phi(a, b; z)=\frac{\Gamma(b)}{\Gamma(a)\Gamma(b-a)}\int_{0}^{1}\mathrm{e}^{zx}x^{a-1}(1-x)^{b-a-1}dx,\quad \mathrm{Re}\; b>\mathrm{Re}\; a>0.
$$
It is a known fact that $D_n(t)$ can be expressed as the product of $h_j(t)$ \cite[(2.1.6)]{Ismail},
\be\label{hankel}
D_{n}(t)=\prod_{j=0}^{n-1}h_{j}(t).
\ee
In addition, from (\ref{be2}) and (\ref{hankel}) we have the following relation:
$$
\bt_n(t)=\frac{D_{n+1}(t)D_{n-1}(t)}{D_n^2(t)}.
$$

Hankel determinants generated by the perturbed Gaussian, Laguerre and Jacobi weights have been studied a lot in recent years \cite{Basor2015,BCI,CD,Min2020,Min2020b,MLC,Xu2015,Zeng}. This is in part due to the connection with random matrix theory, since Hankel determinants may represent the partition functions of the perturbed unitary ensembles.
Hankel determinants are also closely related to the study of orthogonal polynomials and its recurrence coefficients \cite{Ismail,Szego}.

The study of the asymptotics of the perturbed Hankel determinants is of particular interest. For example, Kuijlaars et al. \cite{Kuij} considered a modified Jacobi weight
$
w(x)=(1-x)^\al(1+x)^\bt h(x),\; x\in[-1,1], \al, \bt>-1,
$
where $h(x)$ is real analytic and strictly positive on $[-1,1]$. They obtained the large $n$ asymptotics of the orthogonal polynomials, the recurrence coefficients and the associated Hankel determinant by using the Riemann-Hilbert approach. However, there is an undetermined constant in the asymptotics of the Hankel determinant \cite[Corollary 1.8]{Kuij}. This constant is obtained by Charlier and Gharakhloo \cite[Theorem 1.3]{CG} who investigated a more general weight. In addition, one can also refer to the study of the asymptotics of other Hankel determinants with jump discontinuities \cite{BCI,Its,Min2019} and with Fisher-Hartwig singularities \cite{Charlier,CD,DIK,Min2020,Wu}.


Hankel determinants play a fundamental role in random matrix theory \cite{Mehta}. It is well known that the Hankel determinant $D_n(t)$ can be viewed as the partition function of the perturbed Jacobi unitary ensemble \cite[Corollary 2.1.3]{Ismail}, i.e.,
$$
D_n(t)=\frac{1}{n!}\int_{[-1,1]^n}\prod_{1\leq i<j\leq n}(x_i-x_j)^2\prod_{k=1}^n w(x_k,t)dx_k.
$$
When $t=0$, (\ref{wei}) is the standard symmetric Jacobi weight, and we have the following formula for $D_n(0)$ \cite[(17.6.2)]{Mehta}:
\bea\label{bg}
D_n(0)&=&\frac{2^{n(n+2\al)}}{n!}\prod_{j=1}^{n}\frac{\Gamma(j+1)\Gamma^2(j+\al)}{\Gamma(j+n+2\al)}\nonumber\\
&=&2^{n(n+2\al)}\frac{G(n+1)G(n+1+2\al)G^2(n+\al+1)}{G(2n+2\al+1)G^2(\al+1)},
\eea
where $G(\cdot)$ is the Barnes $G$-function which satisfies the relation \cite{Barnes,Voros}
$$
G(z+1)=\Gamma(z)G(z),\qquad\qquad G(1):=1.
$$

The rest of the paper is divided into sections as follows. In the next section, we apply the ladder operators and its supplementary conditions to the monic orthogonal polynomials with respect to the semi-classical Jacobi weight. This allows us to obtain the second-order difference equations satisfied by $\bt_n(t)$ and $\mathrm{p}(n,t)$, respectively. We also derive the second-order differential equation for the orthogonal polynomials, with the coefficients all expressed in terms of $\bt_n(t)$. In Section 3, we give the large $n$ asymptotic expansions of $\bt_n(t)$ and $\mathrm{p}(n,t)$ based on the difference equations. We consider the $t$ evolution and establish the relation of our problem with the Painlev\'{e} equations in Section 4. In the last section, we derive the second-order differential and difference equations satisfied by a quantity related to the logarithmic derivative of the Hankel determinant. Finally, we obtain the large $n$ asymptotic expansion of our Hankel determinant.

\section{Ladder Operators and Second-Order Nonlinear Difference Equations}
The ladder operator approach has been widely applied to the study of orthogonal polynomials and Hankel determinants for the perturbed Gaussian, Laguerre and Jacobi weights; see, e.g., \cite{BCE,Basor2015,ChenIts,Dai,Min2019,Min2020b,MLC}. Following Basor et al. \cite{BCE} and Chen and Its \cite{ChenIts}, we have a pair of ladder operators for our semi-classical Jacobi polynomials:
\be\label{lowering}
\left(\frac{d}{dz}+B_{n}(z)\right)P_{n}(z)=\beta_{n}A_{n}(z)P_{n-1}(z),
\ee
\be\label{raising}
\left(\frac{d}{dz}-B_{n}(z)-\mathrm{v}'(z)\right)P_{n-1}(z)=-A_{n-1}(z)P_{n}(z),
\ee
where $\mathrm{v}(z):=-\ln w(z)$ and
\be\label{an}
A_{n}(z):=\frac{1}{h_{n}}\int_{-1}^{1}\frac{\mathrm{v}'(z)-\mathrm{v}'(y)}{z-y}P_{n}^{2}(y)w(y)dy,
\ee
\be\label{bn}
B_{n}(z):=\frac{1}{h_{n-1}}\int_{-1}^{1}\frac{\mathrm{v}'(z)-\mathrm{v}'(y)}{z-y}P_{n}(y)P_{n-1}(y)w(y)dy.
\ee
For brevity, please note that in the above and in what follows we do not display the $t$-dependence of $P_n(x),\;w(x),\;\bt_n$ and $h_n$ unless it is required.

The functions $A_n(z)$ and $B_n(z)$ satisfy the following identities for all $z\in\mathbb{C}\cup\{\infty\}$:
\be
B_{n+1}(z)+B_{n}(z)=z A_{n}(z)-\mathrm{v}'(z), \tag{$S_{1}$}
\ee
\be
1+z(B_{n+1}(z)-B_{n}(z))=\beta_{n+1}A_{n+1}(z)-\beta_{n}A_{n-1}(z), \tag{$S_{2}$}
\ee
\be
B_{n}^{2}(z)+\mathrm{v}'(z)B_{n}(z)+\sum_{j=0}^{n-1}A_{j}(z)=\beta_{n}A_{n}(z)A_{n-1}(z). \tag{$S_{2}'$}
\ee
Here ($S_{2}'$) is obtained from the combination of ($S_{1}$) and ($S_{2}$), and it gives better insight into the recurrence coefficient $\bt_n$. The conditions ($S_{1}$), ($S_{2}$) and ($S_{2}'$) are usually called the supplementary (compatibility) conditions for the ladder operators, which will play an important role in our following analysis.

Furthermore, $P_{n}(z)$ satisfies the second-order linear ordinary differential equation
\be\label{general}
P_{n}''(z)-\left(\mathrm{v}'(z)+\frac{A_{n}'(z)}{A_{n}(z)}\right)P_{n}'(z)+\left(B_{n}'(z)-B_{n}(z)\frac{A_{n}'(z)}{A_{n}(z)}
+\sum_{j=0}^{n-1}A_{j}(z)\right)P_{n}(z)=0,
\ee
which is obtained by eliminating $P_{n-1}(z)$ from (\ref{lowering}) and (\ref{raising}) together with the aid of ($S_{2}'$).

For our weight (\ref{wei}), we have
$$
\mathrm{v}(z)=-\ln w(z)=tz^2-\al\ln(1-z^2).
$$
It follows that
$$
\mathrm{v}'(z)=2tz+\frac{2\al z}{1-z^2}
$$
and
\be\label{vp}
\frac{\mathrm{v}'(z)-\mathrm{v}'(y)}{z-y}=2t+\frac{2\al(1+zy)}{(1-z^2)(1-y^2)}.
\ee
Substituting (\ref{vp}) into the definitions of $A_n(z)$ and $B_n(z)$ in (\ref{an}) and (\ref{bn}) and taking account of the parity, we find
\be\label{anz}
A_{n}(z)=2t+\frac{R_{n}(t)}{1-z^2},
\ee
\be\label{bnz}
B_{n}(z)=\frac{z\:r_{n}(t)}{1-z^2},
\ee
where
\be\label{Rnt}
R_{n}(t):=\frac{2\al}{h_{n}}\int_{-1}^{1}\frac{1}{1-y^2}P_{n}^{2}(y)w(y) dy,
\ee
\be\label{rnt}
r_{n}(t):=\frac{2\al}{h_{n-1}}\int_{-1}^{1}\frac{y}{1-y^2}P_{n}(y)P_{n-1}(y)w(y) dy.
\ee
\begin{remark}
From (\ref{Rnt}) and (\ref{rnt}), we have
\be\label{Rn0}
R_n(0)=2n+1+2\al
\ee
and
\be\label{rn0}
r_n(0)=n,
\ee
which can also be found in \cite{ChenIsmail2005}.
\end{remark}
\begin{proposition}
The auxiliary quantities $R_n(t),\;r_n(t)$ and the recurrence coefficient $\bt_n$ have the following relations:
\be\label{re1}
r_{n+1}(t)+r_{n}(t)=R_n(t)-2\al,
\ee
\be\label{re2}
r_n(t)=n-2t\bt_n,
\ee
\be\label{re3}
r_n^2(t)+2\al\: r_n(t)=\bt_n R_n(t)R_{n-1}(t),
\ee
\be\label{re4}
2(t-\al)r_n(t)-r_n^2(t)+\sum_{j=0}^{n-1}R_j(t)=2t\bt_n(R_n(t)+R_{n-1}(t)).
\ee
\end{proposition}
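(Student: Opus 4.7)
The plan is to substitute the closed forms (\ref{anz})--(\ref{bnz}) for $A_n(z)$ and $B_n(z)$ into the three supplementary conditions $(S_1)$, $(S_2)$ and $(S_2')$, clear the common denominator $(1-z^2)^2$, and equate coefficients of the distinct rational basis functions $1,\; 1/(1-z^2),\; 1/(1-z^2)^2$ (equivalently, match the residues at the double poles $z=\pm 1$ and the value at infinity). Since $v'(z)=2tz+2\alpha z/(1-z^2)$ and $B_n(z)$ is an odd rational function of $z$, the relations coming from $z\mapsto -z$ parity are automatic, and the identities below fall out after a direct simplification.

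First I would handle $(S_1)$: the left-hand side becomes $z(r_{n+1}+r_n)/(1-z^2)$, while the right-hand side becomes $z(R_n-2\alpha)/(1-z^2)$, and comparing numerators yields (\ref{re1}). Next I would expand $(S_2')$. Writing
\[
\frac{z^2}{(1-z^2)^2}=\frac{1}{(1-z^2)^2}-\frac{1}{1-z^2},\qquad \frac{z^2}{1-z^2}=\frac{1}{1-z^2}-1,
\]
the left-hand side of $(S_2')$ organizes as
\[
\frac{r_n^2+2\alpha r_n}{(1-z^2)^2}+\frac{-r_n^2-2\alpha r_n+2tr_n+\sum_{j=0}^{n-1}R_j}{1-z^2}+2t(n-r_n),
\]
and the right-hand side as
\[
\frac{\beta_n R_n R_{n-1}}{(1-z^2)^2}+\frac{2t\beta_n(R_n+R_{n-1})}{1-z^2}+4t^2\beta_n.
\]
Matching the coefficient of $1/(1-z^2)^2$ gives (\ref{re3}); matching the coefficient of $1/(1-z^2)$ gives (\ref{re4}); and matching the constant term gives $2t(n-r_n)=4t^2\beta_n$, which is (\ref{re2}).

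As a consistency check (and an independent derivation of (\ref{re2})) I would also insert the ans\"atze into $(S_2)$: comparing constant and $z^2$ coefficients in the resulting rational identity yields the telescoping relation $r_{n+1}-r_n=1-2t(\beta_{n+1}-\beta_n)$, which together with the initial values $r_0(t)=0$ (from (\ref{rnt}), since $P_{-1}\equiv 0$) and $\beta_0=0$ sums to (\ref{re2}).

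There is no conceptual obstacle; the work is essentially algebraic bookkeeping. The only mild subtlety is correctly separating the contributions at the second-order poles from those at the simple poles when partial-fractioning the $z^2$ in the numerators; once the decomposition above is in hand, the four identities (\ref{re1})--(\ref{re4}) are simultaneous coefficient matches from $(S_1)$ and $(S_2')$, with the constant-term match redundantly reproducing the linear relation (\ref{re2}) also obtainable from $(S_2)$.
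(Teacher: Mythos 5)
Your proposal is correct and follows essentially the same route as the paper: substitute the closed forms of $A_n(z)$ and $B_n(z)$ into $(S_1)$ and $(S_2')$ and match the coefficients of $1$, $1/(1-z^2)$ and $1/(1-z^2)^2$, which yields (\ref{re1}) and then (\ref{re2})--(\ref{re4}) exactly as in the paper's proof. Your supplementary derivation of (\ref{re2}) from $(S_2)$ by telescoping is also the alternative the paper notes in a remark, and it has the minor virtue of avoiding the division by $2t$ implicit in reading (\ref{re2}) off the constant term of $(S_2')$.
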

\begin{proof}
Substituting (\ref{anz}) and (\ref{bnz}) into ($S_{1}$) and equating the coefficients of $\frac{1}{1-z^2}$, we get (\ref{re1}). Similarly, substituting (\ref{anz}) and (\ref{bnz}) into ($S_{2}'$) and equating the constant terms, the coefficients of $\frac{1}{(1-z^2)^2}$ and $\frac{1}{1-z^2}$, we obtain (\ref{re2}), (\ref{re3}) and (\ref{re4}), respectively.
\end{proof}
\begin{remark}
The identity (\ref{re2}) can also be derived from ($S_{2}$).
\end{remark}

\begin{theorem}
The recurrence coefficient $\bt_n$ satisfies the following second-order nonlinear difference equation:
\be\label{btd}
(n-2 t \beta_n)^2+2 \alpha  (n-2 t \beta_n)-\beta_n (2 n-1+2 \alpha -2 t \beta_{n-1}-2 t \beta_n) (2 n+1+2 \alpha -2 t \beta_n-2 t \beta_{n+1})=0,
\ee
with the initial conditions
$$
\bt_0=0,\qquad \bt_1=\frac{\Phi\left(\frac{3}{2},\frac{5}{2}+\al;-t\right)}{(3+2\al)\Phi\left(\frac{1}{2},\frac{3}{2}+\al;-t\right)}
$$
and $\Phi(\cdot, \cdot; \cdot)$ is the Kummer's confluent hypergeometric function.
\end{theorem}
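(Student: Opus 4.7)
The plan is purely algebraic: use the three compatibility identities (\ref{re1}), (\ref{re2}) and (\ref{re3}) from the preceding Proposition to eliminate the auxiliary quantities $r_n(t)$ and $R_n(t)$ in favor of the recurrence coefficient $\bt_n$.

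First I would combine (\ref{re1}) with (\ref{re2}) applied at indices $n$ and $n+1$ to obtain
$$
R_n(t) \;=\; r_{n+1}(t)+r_n(t)+2\al \;=\; 2n+1+2\al-2t(\bt_n+\bt_{n+1}),
$$
and the index-shifted version
$$
R_{n-1}(t) \;=\; 2n-1+2\al-2t(\bt_{n-1}+\bt_n).
$$
Since (\ref{re2}) already rewrites the left-hand side of (\ref{re3}) as $(n-2t\bt_n)^2+2\al(n-2t\bt_n)$, substituting the two displays above for $R_n(t)R_{n-1}(t)$ into the right-hand side of (\ref{re3}) produces equation (\ref{btd}) in a single step.

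For the initial data, $\bt_0=0$ was noted earlier as a consequence of (\ref{be1}). To compute $\bt_1(t)$ I would use (\ref{be2}) in the form $\bt_1 = h_1/h_0$; because $w(x,t)$ is even, $P_1(x,t)=x$, so $h_0 = \mu_0(t)$ and $h_1 = \mu_2(t)$. Evaluating the moment formula (\ref{mom}) at $j=0$ and $j=2$, and simplifying the resulting ratio by means of $\Gamma(3/2) = \tfrac12 \Gamma(1/2)$ and $\Gamma(5/2+\al) = (3/2+\al)\Gamma(3/2+\al)$, leaves exactly the claimed quotient of Kummer functions.

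The derivation presents no serious obstacle: granted the Proposition, (\ref{btd}) is essentially a one-line substitution, and the initial values reduce to standard moment evaluations. The only point worth checking is that the three identities (\ref{re1})--(\ref{re3}) are valid at $n=1$, so that $R_0(t)$ is formed correctly via $\bt_0=0$; once this is confirmed, (\ref{btd}) together with the two initial values determines $\bt_n(t)$ uniquely for all $n\geq 1$.
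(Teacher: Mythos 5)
Your proposal is correct and follows essentially the same route as the paper: both derive $R_n(t)=2n+1+2\al-2t(\bt_n+\bt_{n+1})$ from (\ref{re1}) and (\ref{re2}), substitute this (and its index-shifted version) together with (\ref{re2}) into (\ref{re3}) to get (\ref{btd}), and obtain $\bt_1=\mu_2(t)/\mu_0(t)$ from the explicit moments (\ref{mom}). Your extra remark about checking the identities at $n=1$ (where $\bt_0=0$) is a sensible sanity check but does not change the argument.
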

\begin{proof}
The combination of (\ref{re1}) and (\ref{re2}) gives
\be\label{Rnb}
R_n(t)=2 n+1+2 \alpha -2 t \beta_n-2 t \beta_{n+1}.
\ee
Substituting (\ref{re2}) and (\ref{Rnb}) into (\ref{re3}), we obtain (\ref{btd}). The initial conditions $\bt_0$ is given in the introduction, and $\bt_1$ is easily computed from $\bt_1=h_1/h_0=\mu_2(t)/\mu_0(t)$ by using (\ref{mom}).
\end{proof}

\begin{lemma}
We have the identity
\be\label{id3}
\bt_nR_n(t)=r_n(t)+2\mathrm{p}(n,t)+2t\bt_n\bt_{n-1}.
\ee
\end{lemma}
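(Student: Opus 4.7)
The plan is to extract identity (\ref{id3}) by reading off coefficients of an appropriate power of $z$ in the lowering operator relation (\ref{lowering}) after substituting the explicit expressions (\ref{anz}) for $A_n(z)$ and (\ref{bnz}) for $B_n(z)$. First I would multiply (\ref{lowering}) through by $(1-z^2)$ to clear denominators, obtaining the polynomial identity
\[
(1-z^2)P_n'(z) + z\,r_n(t)\,P_n(z) = \beta_n\bigl[R_n(t) + 2t(1-z^2)\bigr]P_{n-1}(z).
\]
Both sides are polynomials in $z$, so coefficients of each power can be compared directly.

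Next I would insert the monomial expansion $P_n(z) = z^n + \mathrm{p}(n,t)z^{n-2} + \cdots$ from (\ref{expan}) and its analogue for $P_{n-1}(z)$, and expand both sides in descending powers of $z$. Matching the coefficient of $z^{n+1}$ recovers (\ref{re2}). The content of (\ref{id3}) is hidden in the coefficient of $z^{n-1}$: on the left this coefficient is $n + (r_n-n+2)\mathrm{p}(n,t)$, while on the right it equals $\beta_n(R_n+2t) - 2t\beta_n\mathrm{p}(n-1,t)$. Using $r_n - n = -2t\beta_n$ from (\ref{re2}), the resulting equation rearranges to
\[
\beta_n R_n(t) = r_n(t) + 2\mathrm{p}(n,t) + 2t\beta_n[\mathrm{p}(n-1,t) - \mathrm{p}(n,t)].
\]

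Finally I would invoke (\ref{be1}) in the form $\mathrm{p}(n-1,t) - \mathrm{p}(n,t) = \beta_{n-1}$ to replace the bracket and obtain (\ref{id3}) exactly. The argument is essentially bookkeeping; the only place requiring care is to remember that the subleading term $(n-2)\mathrm{p}(n,t)z^{n-3}$ of $P_n'(z)$ contributes at order $z^{n-1}$ once multiplied by $-z^2$ from $(1-z^2)$, and similarly that the $-2tz^2$ piece on the right shifts the leading $z^{n-1}$ term of $\beta_n P_{n-1}(z)$ up to $z^{n+1}$. Beyond this there is no real obstacle, and the identity drops out immediately.
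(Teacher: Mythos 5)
Your proof is correct, but it takes a genuinely different route from the paper's. You extract (\ref{id3}) purely algebraically: clearing denominators in the lowering relation (\ref{lowering}) with $A_n$, $B_n$ given by (\ref{anz})--(\ref{bnz}) and matching coefficients of $z^{n+1}$ (which reproduces (\ref{re2})) and of $z^{n-1}$. I checked the bookkeeping — the left-hand coefficient $n+(r_n-n+2)\mathrm{p}(n,t)$ and the right-hand coefficient $\beta_n(R_n+2t)-2t\beta_n\mathrm{p}(n-1,t)$ are both right, and the final substitution $\mathrm{p}(n-1,t)-\mathrm{p}(n,t)=\beta_{n-1}$ from (\ref{be1}) closes the argument. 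The paper instead defers the proof to Section 4 and obtains the identity from the $t$-evolution: differentiating the orthogonality relation $\int_{-1}^{1}P_nP_{n-2}w\,dx=0$ in $t$ gives $\frac{d}{dt}\mathrm{p}(n,t)=h_n/h_{n-2}=\beta_n\beta_{n-1}$ (equation (\ref{pnt})), while an integration by parts yields $2t\frac{d}{dt}\mathrm{p}(n,t)=\beta_nR_n(t)-r_n(t)-2\mathrm{p}(n,t)$ (equation (\ref{id2})); equating the two gives (\ref{id3}). Your argument has the advantage of being self-contained within Section 2 — it uses no $t$-derivatives, so the lemma would not need a forward reference — and it is the natural companion to how (\ref{re1})--(\ref{re4}) are read off from $(S_1)$ and $(S_2')$. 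The paper's route costs nothing extra in context, since (\ref{id2}) and (\ref{pnt}) are needed later anyway for the coupled Riccati equations and the Painlev\'{e} V connection.
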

\begin{proof}
The proof will be given in Section 4. See Lemma \ref{lem} and Remark \ref{rem}.
\end{proof}
\begin{theorem}
The sub-leading coefficient of the monic orthogonal polynomials, $\mathrm{p}(n):=\mathrm{p}(n,t)$, satisfies the second-order nonlinear difference equation:
\begin{small}
\bea\label{pnd}
&&\big(n-2 t\: \mathrm{p}(n)+2 t\: \mathrm{p}(n+1)\big)^2+2 \alpha\big(n-2 t\: \mathrm{p}(n)+2 t\: \mathrm{p}(n+1)\big)\nonumber\\
&-&\big(2 n-1+2 \alpha -2 t\: \mathrm{p}(n-1)+2 t\: \mathrm{p}(n+1)\big)\nonumber\\
&\times&\big[n+2 \mathrm{p}(n)+2 t\big(\mathrm{p}(n)-\mathrm{p}(n+1)\big) \big(\mathrm{p}(n-1)-\mathrm{p}(n)-1\big) \big]=0,
\eea
with the initial conditions
$$
\mathrm{p}(1,t)=0,\qquad \mathrm{p}(2,t)=-\frac{\Phi\left(\frac{3}{2},\frac{5}{2}+\al;-t\right)}{(3+2\al)\Phi\left(\frac{1}{2},\frac{3}{2}+\al;-t\right)}.
$$
\end{small}
\end{theorem}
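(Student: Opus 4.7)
The plan is to deduce (\ref{pnd}) from the already-established quadratic relation (\ref{re3}), namely $r_n^2 + 2\al r_n = \bt_n R_n R_{n-1}$, by rewriting every quantity in sight in terms of $\mathrm{p}(n-1,t)$, $\mathrm{p}(n,t)$, $\mathrm{p}(n+1,t)$ using the telescoping identity (\ref{be1}). The only non-routine ingredient is the identity (\ref{id3}), which is precisely what forces the final equation to close on three consecutive values of $\mathrm{p}$ rather than four.

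First I would handle $r_n$ and $R_{n-1}$. From (\ref{re2}) combined with (\ref{be1}),
\[
r_n = n - 2t\,\mathrm{p}(n,t) + 2t\,\mathrm{p}(n+1,t),
\]
so $r_n^2 + 2\al r_n$ reproduces the first two terms on the left-hand side of (\ref{pnd}). Applying (\ref{Rnb}) at index $n-1$ and telescoping $-2t\bt_{n-1}-2t\bt_n$ via (\ref{be1}) collapses it to $-2t\,\mathrm{p}(n-1,t)+2t\,\mathrm{p}(n+1,t)$, which is exactly the middle bracket of (\ref{pnd}).

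The subtle factor is $\bt_n R_n$. Substituting $R_n$ directly via (\ref{Rnb}) would drag in $\bt_{n+1}$, and hence $\mathrm{p}(n+2,t)$, which must not appear in the final equation. To avoid this I would invoke (\ref{id3}) to replace $\bt_n R_n$ by $r_n + 2\mathrm{p}(n,t) + 2t\bt_n\bt_{n-1}$; expanding this with (\ref{be1}) gives
\[
\bt_n R_n = n + 2\mathrm{p}(n,t) + 2t\bigl(\mathrm{p}(n,t)-\mathrm{p}(n+1,t)\bigr)\bigl(\mathrm{p}(n-1,t)-\mathrm{p}(n,t)-1\bigr),
\]
which matches the third bracket of (\ref{pnd}) verbatim. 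Assembling the three rewrites in (\ref{re3}) then produces (\ref{pnd}).

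For the initial values, $\mathrm{p}(1,t)=0$ holds by the convention stated below (\ref{expan}), and (\ref{be1}) at $n=1$ gives $\mathrm{p}(2,t) = -\bt_1$, whose closed form is exactly the expression for $\bt_1$ computed in the previous theorem. The genuine content of the proof is therefore the identity (\ref{id3}); its derivation is deferred to Section~4 and relies on differentiating auxiliary quantities in $t$. Without that identity, the substitution scheme above would produce a four-term relation spanning $\mathrm{p}(n-1,t),\ldots,\mathrm{p}(n+2,t)$ rather than the three-term form announced in (\ref{pnd}).
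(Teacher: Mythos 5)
Your proposal is correct and follows essentially the same route as the paper: substitute (\ref{id3}) into (\ref{re3}), eliminate $r_n$ and $R_{n-1}$ via (\ref{re2}) and (\ref{Rnb}), and convert to $\mathrm{p}$'s with (\ref{be1}), with the same initial conditions obtained from $\mathrm{p}(2,t)=-\bt_1$. Your observation that (\ref{id3}) is precisely what keeps the relation three-term (avoiding $\mathrm{p}(n+2,t)$) is a nice explicit remark, but the underlying argument is identical.
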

\begin{proof}
Substituting (\ref{id3}) into (\ref{re3}) gives us
$$
r_n^2(t)+2\al\: r_n(t)=(r_n(t)+2\mathrm{p}(n,t)+2t\bt_n\bt_{n-1})R_{n-1}(t).
$$
Using (\ref{re2}) and (\ref{Rnb}) to eliminate $r_n(t)$ and $R_{n-1}(t)$, we have
$$
(n-2t\bt_n)^2+2\al(n-2t\bt_n)=(n-2t\bt_n+2\mathrm{p}(n,t)+2t\bt_n\bt_{n-1})(2 n-1+2 \alpha -2 t \beta_{n-1}-2 t \beta_{n}).
$$
Taking account of (\ref{be1}), we obtain (\ref{pnd}). The initial conditions $\mathrm{p}(1,t)$ is given in the introduction, and $\mathrm{p}(2,t)$ is derived from (\ref{be1}) by using the value of $\bt_1$ in the last theorem.
\end{proof}

We end this section by showing the second-order differential equation for our time-dependent Jacobi polynomials, with the coefficients all expressed in terms of $\bt_n$.
\begin{theorem}
The monic orthogonal polynomials $P_n(z),\; n=0,1,2,\ldots,$ satisfy the second-order differential equation
\be\label{odep}
P_n''(z)-\left(\mathrm{v}'(z)+\frac{A_{n}'(z)}{A_{n}(z)}\right)P_n'(z)+\left(B_{n}'(z)-B_{n}(z)\frac{A_{n}'(z)}{A_{n}(z)}
+\sum_{j=0}^{n-1}A_{j}(z)\right)P_n(z)=0,
\ee
where
\be\label{an1}
A_n(z)=2t+\frac{2n+1+2\al-2t(\bt_n+\bt_{n+1})}{1-z^2},
\ee
\be\label{bn1}
B_n(z)=\frac{z(n-2t\bt_n)}{1-z^2},
\ee
\bea\label{sum1}
\sum_{j=0}^{n-1}A_{j}(z)&=&2nt+\frac{n^2-2n(t-\al)+2t\bt_n(2t+1-2t\bt_{n+1})}{1-z^2}\nonumber\\
&+&\frac{2 t (n-2 t \beta_n) (n+2 \alpha -2 t \beta_n)}{\left[2 n+1+2\alpha-2t(\bt_n+\bt_{n+1})\right](1-z^2)}
\eea
and
$$
\mathrm{v}'(z)=2tz+\frac{2\al z}{1-z^2}.
$$
\end{theorem}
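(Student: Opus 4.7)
The plan is to obtain the three formulas (\ref{an1}), (\ref{bn1}) and (\ref{sum1}) by substituting the relations of Proposition~2.1 into the explicit expressions (\ref{anz}) and (\ref{bnz}) for $A_n(z)$ and $B_n(z)$; the differential equation itself is then just the instance of the general identity (\ref{general}) for our weight. The first two formulas are immediate: equation (\ref{Rnb}) plugged into (\ref{anz}) yields (\ref{an1}), and the identity (\ref{re2}) plugged into (\ref{bnz}) yields (\ref{bn1}). The $\mathrm{v}'(z)$ formula is just $-w'(z)/w(z)$ for the explicit weight (\ref{wei}).

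The real work lies in deriving (\ref{sum1}). Since $A_j(z)=2t+R_j(t)/(1-z^2)$, we have
\be
\sum_{j=0}^{n-1}A_{j}(z)=2nt+\frac{1}{1-z^{2}}\sum_{j=0}^{n-1}R_{j}(t),
\ee
so the problem reduces to expressing $\sum_{j=0}^{n-1}R_{j}(t)$ in terms of $\beta_n,\beta_{n+1}$. I would solve (\ref{re4}) for this sum to get
\be
\sum_{j=0}^{n-1}R_{j}(t)=2t\beta_{n}\bigl(R_{n}(t)+R_{n-1}(t)\bigr)+r_{n}^{2}(t)-2(t-\alpha)r_{n}(t),
\ee
and then use (\ref{re3}) in the form $\beta_{n}R_{n-1}(t)=r_{n}(t)(r_{n}(t)+2\alpha)/R_{n}(t)$ to eliminate $R_{n-1}$, obtaining
\be
\sum_{j=0}^{n-1}R_{j}(t)=2t\beta_{n}R_{n}(t)+\frac{2t\,r_{n}(t)\bigl(r_{n}(t)+2\alpha\bigr)}{R_{n}(t)}+r_{n}^{2}(t)-2(t-\alpha)r_{n}(t).
\ee
Substituting $r_{n}=n-2t\beta_{n}$ from (\ref{re2}) and $R_{n}=2n+1+2\alpha-2t(\beta_{n}+\beta_{n+1})$ from (\ref{Rnb}) into the right-hand side and collecting the terms without the factor $1/R_{n}(t)$ produces the polynomial $n^{2}-2n(t-\alpha)+2t\beta_{n}(2t+1-2t\beta_{n+1})$, while the remaining single fraction becomes $2t(n-2t\beta_{n})(n+2\alpha-2t\beta_{n})/R_{n}(t)$. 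Dividing by $1-z^{2}$ then yields (\ref{sum1}).

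The main obstacle is purely computational: showing that the combination $2t\beta_{n}R_{n}+r_{n}^{2}-2(t-\alpha)r_{n}$ collapses, after cancellation of the $\beta_{n}^{2}$ terms and regrouping of the $\beta_{n}(2n+1+2\alpha)-2\alpha\beta_{n}-2n\beta_{n}$ piece, to the compact numerator in the first fraction of (\ref{sum1}). Once the three coefficient formulas are in hand, the theorem is just a restatement of (\ref{general}) with these substitutions, so no further verification is needed.
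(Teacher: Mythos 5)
Your proposal is correct and follows essentially the same route as the paper: both obtain (\ref{an1}) and (\ref{bn1}) by inserting (\ref{Rnb}) and (\ref{re2}) into (\ref{anz}) and (\ref{bnz}), and both evaluate $\sum_{j=0}^{n-1}R_j(t)$ by combining (\ref{re4}) with (\ref{re3}) to eliminate $R_{n-1}$ and then substituting (\ref{re2}) and (\ref{Rnb}). The algebraic collapse you describe does check out, so no gap remains.
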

\begin{proof}
We restate (\ref{general}) as (\ref{odep}). Substituting (\ref{Rnb}) into (\ref{anz}) and (\ref{re2}) into (\ref{bnz}), we obtain (\ref{an1}) and (\ref{bn1}) respectively. Moreover, from (\ref{anz}) we have
\bea
\sum_{j=0}^{n-1}A_{j}(z)&=&2nt+\frac{\sum_{j=0}^{n-1}R_j(t)}{1-z^2}\nonumber\\
&=&2nt+\frac{r_n^2(t)-2(t-\al)r_n(t)+2t\bt_nR_n(t)+\frac{2t(r_n^2(t)+2\al\: r_n(t))}{R_n(t)}}{1-z^2},\nonumber
\eea
where use has been made of (\ref{re4}) and (\ref{re3}). Inserting (\ref{re2}) and (\ref{Rnb}) into the above equation, we obtain (\ref{sum1}). This completes the proof.
\end{proof}
\begin{remark}
When $t=0$, equation (\ref{odep}) is reduced to the differential equation satisfied by the classical symmetric Jacobi polynomials \cite[(4.2.1)]{Szego}.
\end{remark}

\section{Asymptotics of the Recurrence Coefficient $\bt_n(t)$ and the Sub-leading Coefficient $\mathrm{p}(n,t)$}
In this section, we derive the full asymptotic expansions for $\bt_n(t)$ and $\mathrm{p}(n,t)$ as $n\rightarrow\infty$ by using the nonlinear second-order difference equations.

Kuijlaars et al. \cite[Theorem 1.10]{Kuij} showed that as $n\rightarrow\infty$, $\bt_n$ has the expansion of the form
\be\label{bex}
\bt_n=a_0+\sum_{j=1}^{\infty}\frac{a_j}{n^j},
\ee
where $a_j, j=0,1,2,\ldots,$ are the expansion coefficients to be determined. The expansion form (\ref{bex}) can also be obtained by the Coulomb fluid approach \cite{ChenIsmail}.
\begin{theorem}\label{thm}
The recurrence coefficient $\bt_n$ has the following large $n$ expansion
\be\label{beta}
\bt_n=\frac{1}{4}+\sum_{j=1}^{\infty}\frac{a_j}{n^j},\qquad\qquad n\rightarrow\infty,
\ee
where the first few expansion coefficients are
$$
a_1=0,
$$
$$
a_2=\frac{1}{16} (1-4 \alpha ^2),
$$
$$
a_3=\frac{1}{16} (1-4 \alpha ^2) (t-2 \alpha ),
$$
$$
a_4=\frac{1}{64} (1-4 \alpha ^2) (3 t^2-12 \alpha  t+12 \alpha ^2+1),
$$
$$
a_5=\frac{1}{64} (1-4 \alpha ^2) \left[2t^3-12 \alpha  t^2+(11+20 \alpha ^2) t-4\al (1+4 \alpha ^2 )\right],
$$
$$
a_6=\frac{1}{256} (1-4 \alpha ^2) \left[5 t^4-40 \alpha  t^3+20(5+4 \alpha ^2) t^2-20 \alpha  (11+4 \alpha ^2) t+80 \alpha ^4+40 \alpha ^2+1\right].
$$
\end{theorem}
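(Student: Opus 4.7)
The plan is to treat the second-order nonlinear difference equation (\ref{btd}) as an algebraic recursion for the coefficients $a_j$ in the ansatz (\ref{bex}), whose existence has already been secured by the Riemann--Hilbert analysis in \cite[Theorem 1.10]{Kuij}. Concretely, one writes
\begin{equation*}
\beta_n = a_0+\sum_{j=1}^{\infty}\frac{a_j}{n^j},\qquad
\beta_{n\pm 1} = a_0 + \sum_{j=1}^{\infty}\frac{a_j}{(n\pm 1)^j},
\end{equation*}
and then expands $(n\pm 1)^{-j} = n^{-j}\bigl(1 \pm 1/n\bigr)^{-j}$ in a formal power series in $1/n$ using the binomial theorem. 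Substituting these expressions into (\ref{btd}) turns the difference equation into a formal Laurent series in $1/n$ whose coefficients must all vanish. The whole argument then reduces to reading off the coefficients successively.

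First I would extract the leading-order balance. The highest power in (\ref{btd}) is $n^2$, and isolating its coefficient gives $n^2 - 4n^2 a_0 = 0$, so $a_0 = 1/4$, consistent with the expected Mhaskar--Rakhmanov--Saff value for the symmetric Jacobi-type weight on $[-1,1]$. Next, the $n^1$ coefficient gives a linear equation for $a_1$ which forces $a_1 = 0$ (the absence of a $1/n$ term reflects the fact that the underlying weight is even and has no $x^{1}$ contribution in $\mathrm{v}'(z)$). Having pinned down $a_0$ and $a_1$, each further coefficient $a_{k}$ enters for the first time linearly at order $n^{2-k}$ (the next-higher orders having already been set to zero by the earlier choices), so $a_k$ is determined \emph{recursively} by a single linear equation whose inhomogeneous part is a polynomial in $a_0,\dots,a_{k-1},\alpha,t$.

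Carrying out this procedure through order $n^{-4}$ produces the six coefficients $a_1,\ldots,a_6$ listed in the statement. I expect no conceptual difficulty at any step; the main obstacle is purely the bookkeeping, because each matching equation involves up to six products of truncated power series (from $\beta_n^2$, $\beta_n\beta_{n\pm 1}$, and the factors $(2n\pm 1+2\alpha - 2t\beta_{n-1}-2t\beta_n)$) and the cross-terms proliferate quickly. In practice this is best organized by truncating all series at $1/n^{6}$ from the outset, expanding once and for all, and then collecting coefficients with a computer-algebra system. One can sanity-check the output at $t=0$: the Chebyshev-of-the-second-kind weight ($\alpha = 1/2$) forces the factor $1-4\alpha^{2}$ in every $a_j$, $j\geq 2$, to vanish and recovers the exact identity $\beta_n \equiv 1/4$, matching \cite[(4.2.1)]{Szego} and the classical formulas.

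Finally, I would emphasize that the appeal to \cite{Kuij} is essential: the difference equation (\ref{btd}) alone admits many formal solutions (one for each branch of the leading-order algebraic equation and, a priori, two-parameter families once correction terms are included), and the Riemann--Hilbert result is what selects our particular solution and guarantees that the formal expansion produced above is the genuine asymptotic expansion of $\beta_n(t)$. With this justification in place, the uniqueness of the expansion at each order makes the recursive computation unambiguous.
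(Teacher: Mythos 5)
Your proposal is correct and follows essentially the same route as the paper: substitute the expansion form guaranteed by Kuijlaars et al.\ into the difference equation (\ref{btd}), expand everything (including $\beta_{n\pm1}$) as a formal series in $1/n$, and solve the resulting linear equation for each $a_k$ order by order, starting from $1-4a_0=0$. The only (harmless) quibble is that your $t=0$, $\alpha=1/2$ sanity check should point to the closed form $\beta_n(0)=n(n+2\alpha)/[(2n-1+2\alpha)(2n+1+2\alpha)]$ rather than to \cite[(4.2.1)]{Szego}, which is the Jacobi differential equation.
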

\begin{proof}
Substituting (\ref{bex}) into (\ref{btd}), the difference equation satisfied by $\bt_n$, we obtain the following expression by taking a large $n$ limit:
$$
e_{-2}n^2+e_{-1}n+e_0+\sum_{j=1}^{\infty}\frac{e_j}{n^j}=0,
$$
where all the coefficients $e_j, j=-2, -1, 0,\ldots,$ are explicitly known and should be equal to 0 identically.

The equation $e_{-2}=0$ reads
$$
1-4a_0=0.
$$
Then we have
$$
a_0=\frac{1}{4}.
$$
Setting $a_0=\frac{1}{4}$, the equation $e_{-1}=0$ gives
$$
-4a_1=0,
$$
and we get
$$
a_1=0.
$$
With the values of $a_0$ and $a_1$, the equation $e_0=0$ gives us
$$
\frac{1}{4}(1-4\al^2-16a_2)=0,
$$
and we have
$$
a_2=\frac{1}{16}(1-4 \alpha ^2).
$$
With the above $a_0$, $a_1$ and $a_2$, the equation $e_1=0$ shows
$$
\frac{1}{4}\left[(1-4\al^2)(t-2\al)-16a_3\right]=0,
$$
and we obtain
$$
a_3=\frac{1}{16} (1-4 \alpha ^2) (t-2 \alpha ).
$$
Proceeding with this procedure, we can easily obtain the values of higher order expansion coefficients $a_4, a_5, a_6, \ldots.$ This establishes the theorem.
\end{proof}
\begin{remark}
When $t=0$, it was shown in \cite[(2.18)]{ChenIsmail2005} that
\be\label{btc}
\bt_n(0)=\frac{n (n+2 \alpha)}{(2 n-1+2 \alpha) (2 n+1+2 \alpha)},
\ee
and then as $n\rightarrow\infty$,
\bea\label{bt0}
\bt_n(0)&=&\frac{1}{4}+\frac{1-4\al^2}{16n^2}-\frac{\al(1-4\al^2)}{8n^3}+\frac{(1-4\al^2)(1+12\al^2)}{64n^4}-\frac{\al(1-4\al^2)(1+4\al^2)}{16n^5}\nonumber\\[8pt]
&+&\frac{(1-4\al^2)(1+40\al^2+80\al^4)}{256n^6}+O(n^{-7}).
\eea
It is easy to check that our result in Theorem \ref{thm} agrees with (\ref{bt0}) when $t=0$.
\end{remark}
\begin{remark}
It can be seen that for the case $\al=\pm\frac{1}{2}$, all coefficients $a_j,\;j=1, 2, \ldots$ in (\ref{beta}) vanish. This fact has been pointed out by Kuijlaars et al. \cite[p. 346]{Kuij} and in that case one can prove that $\bt_n=\frac{1}{4}+O(\mathrm{e}^{-cn})$ for some $c>0$; see also \cite[p. 196]{Kuij1}.
\end{remark}
\begin{remark}
The difference equation approach has also been used to obtain the large $n$ asymptotic expansions for $\bt_n$ in the symmetric Pollaczek-Jacobi type weight \cite{Min2021} and the generalized Freud weight problems \cite{Clarkson1,Clarkson2}.
\end{remark}
\begin{theorem}\label{thmp}
The sub-leading coefficient $\mathrm{p}(n,t)$ has the following expansion as $n\rightarrow\infty$:
\be\label{pne}
\mathrm{p}(n,t)=b_{-1}n+b_0+\sum_{j=1}^{\infty}\frac{b_j}{n^j},
\ee
where the first few coefficients are
$$
b_{-1}=-\frac{1}{4},
$$
$$
b_0=\frac{1}{16} (t+2+4 \alpha),
$$
$$
b_1=\frac{1}{16}(1-4 \alpha ^2),
$$
$$
b_2=\frac{1}{32} (1-4 \alpha ^2) (t+1-2 \alpha),
$$
$$
b_3=\frac{1}{64} (1-4 \alpha ^2) (t+1-2 \alpha)^2,
$$
$$
b_4=\frac{1}{256} (1-4 \alpha ^2) \left[ 2 t^3+6 (1-2 \alpha) t^2+(20 \alpha ^2-24 \alpha+15) t+2 (1-2 \alpha)^3\right],
$$
$$
b_5=\frac{1}{256} (1-4 \alpha ^2) \left[t^4+4(1-2 \alpha ) t^3+8 (2 \alpha ^2-3 \alpha +3) t^2 +2 (1-2 \alpha) (4 \alpha ^2-8 \alpha +11)t+(1-2 \alpha )^4\right].
$$
\end{theorem}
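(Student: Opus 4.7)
The plan mirrors the proof of Theorem \ref{thm}: I would substitute the ansatz (\ref{pne}) into the nonlinear difference equation (\ref{pnd}) and match coefficients of $n^{-k}$ in the resulting formal expansion order by order.

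First, I would write $\mathrm{p}(n\pm 1,t)=b_{-1}(n\pm 1)+b_0+\sum_{j\geq 1}b_j/(n\pm 1)^j$ and Taylor expand each $1/(n\pm 1)^j$ in powers of $1/n$. Inserting these expressions into the three factors of (\ref{pnd}) and multiplying them out, the left-hand side takes the form
\be
\tilde e_{-2}\,n^{2}+\tilde e_{-1}\,n+\tilde e_{0}+\sum_{j\geq 1}\frac{\tilde e_{j}}{n^{j}}=0,
\ee
where each $\tilde e_k$ is an algebraic expression in finitely many of the $b_j$ with coefficients depending on $\al$ and $t$. Requiring every $\tilde e_k$ to vanish yields the desired hierarchy of equations.

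Next I would peel these off in order. The leading equation $\tilde e_{-2}=0$ is a single algebraic equation in $b_{-1}$, forcing $1+4b_{-1}=0$, so $b_{-1}=-1/4$. With $b_{-1}$ pinned, $\tilde e_{-1}=0$ becomes linear in $b_0$ and yields $b_0=(t+2+4\al)/16$. Thereafter, for each $j\geq 1$, the equation $\tilde e_{j-2}=0$ is linear in $b_j$ with a nonvanishing leading coefficient (inherited from the product of the $2n$ piece of the second factor and the $n$ piece of the third factor in (\ref{pnd})), so $b_1,b_2,\ldots$ can be read off successively. Iterating through $j=5$ delivers the six coefficients listed in the theorem.

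As a consistency check, the identity (\ref{be1}), $\bt_n=\mathrm{p}(n,t)-\mathrm{p}(n+1,t)$, combined with Theorem \ref{thm}, already forces $b_{-1}=-1/4$ and determines every $b_j$ with $j\geq 1$ by telescoping; however, the constant $b_0$ is invisible to this relation and must be fixed by (\ref{pnd}) itself. The main obstacle is purely computational: since (\ref{pnd}) contains three shifted arguments $\mathrm{p}(n-1),\mathrm{p}(n),\mathrm{p}(n+1)$ and the ansatz (\ref{pne}) carries both a linear-in-$n$ piece and an asymptotic tail, each $\tilde e_k$ mixes contributions from several $b_j$'s and the bookkeeping grows rapidly. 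No conceptual difficulty arises once the linear-in-$b_j$ structure at each stage is recognised, so the argument reduces to a mechanical, finitely terminating calculation.
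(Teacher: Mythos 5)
Your proposal is correct in substance, but it is not the route the paper takes for this theorem. The paper's proof does not touch the difference equation (\ref{pnd}) at all: it starts from the identity (\ref{id3}), rewrites it with (\ref{Rnb}) and (\ref{re2}) as the closed-form relation $\mathrm{p}(n,t)=\bt_n\big(n+\al+\tfrac{1}{2}+t-t\bt_n-t\bt_{n-1}-t\bt_{n+1}\big)-\tfrac{n}{2}$, and then simply inserts the already-established expansion of $\bt_n$ from Theorem \ref{thm}. That route is computationally much lighter — one substitution into an explicit algebraic formula, with no hierarchy of equations to solve and no solvability to verify at each order — and it automatically fixes the constant term $b_0$, which (as you correctly observe) the telescoping relation (\ref{be1}) alone cannot see. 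Your route, expanding (\ref{pnd}) directly, is exactly the alternative the authors record in the remark immediately following the theorem, where they confirm it reproduces the same coefficients; its advantage is that it treats $\mathrm{p}(n,t)$ autonomously, in parallel with the proof of Theorem \ref{thm}, at the cost of heavier bookkeeping with the three shifts $\mathrm{p}(n-1),\mathrm{p}(n),\mathrm{p}(n+1)$. One small indexing slip: because the ansatz (\ref{pne}) starts at order $n^{1}$, the unknown $b_j$ ($j\geq 1$) first enters through the term $2\,\mathrm{p}(n)$ in the last bracket of (\ref{pnd}) multiplied by the $2n$ of the preceding factor, i.e.\ at order $n^{-(j-1)}$; so it is $\tilde e_{j-1}=0$, not $\tilde e_{j-2}=0$, that determines $b_j$ (indeed $\tilde e_{-1}=0$ gives $b_0$ and $\tilde e_{0}=0$ gives $b_1$). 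With that correction the hierarchy is, as you say, linear in each new unknown with nonvanishing coefficient, and the calculation terminates as claimed.
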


\begin{proof}
From (\ref{id3}) we have
$$
2\mathrm{p}(n,t)=\bt_nR_n(t)-r_n(t)-2t\bt_n\bt_{n-1}.
$$
Inserting (\ref{Rnb}) and (\ref{re2}) into the above gives the expression of $\mathrm{p}(n,t)$ in terms of $\bt_n$:
\be\label{pb}
\mathrm{p}(n,t)=\bt_n\left(n+\al+\frac{1}{2}+t-t\bt_n-t\bt_{n-1}-t\bt_{n+1}\right)-\frac{n}{2}.
\ee
Substituting the expansion of $\bt_n$ in Theorem \ref{thm} into (\ref{pb}), we obtain the desired result by taking a large $n$ limit.
\end{proof}
\begin{remark}
If we suppose that $\mathrm{p}(n,t)$ has the large $n$ expansion of the form
$$
\mathrm{p}(n,t)=b_{-1}n+b_0+\sum_{j=1}^{\infty}\frac{b_j}{n^j}.
$$
Substituting it into (\ref{pnd}), the difference equation satisfied by $\mathrm{p}(n,t)$, we obtain the same result in Theorem \ref{thmp} by adopting the similar procedure in the proof of Theorem \ref{thm}.
\end{remark}
\begin{remark}
For consistency, substituting (\ref{pne}) into $\bt_n=\mathrm{p}(n,t)-\mathrm{p}(n+1,t)$ and taking a large $n$ limit, we find that the result agrees completely with the large $n$ expansion for $\bt_n$ obtained in Theorem \ref{thm}.
\end{remark}
\begin{remark}
When $t=0$, our monic orthogonal polynomials $P_n(x)$ are related to the Gegenbauer (ultraspherical) polynomials $C_{n}^{\nu}(x)$ as follows:
$$
P_n(x)=\frac{n!\Gamma\big(\al+\frac{1}{2}\big)}{2^n\Gamma\big(n+\al+\frac{1}{2}\big)}C_{n}^{\al+\frac{1}{2}}(x),
$$
where
$$
C_{n}^{\nu}(x)=\sum_{k=0}^{[n/2]}\frac{(-1)^k 2^{n-2k}\Gamma(n+\nu-k)}{k!(n-2k)!\Gamma(\nu)}x^{n-2k}
$$
and $[\lambda]$ denotes the largest integer $\leq \lambda$ \cite[p. 125]{Lebedev}. See also \cite[p. 94--98]{Ismail}. It follows that
$$
\mathrm{p}(n,0)=\mathrm{p}(n,t)|_{t=0}=-\frac{n(n-1)}{2(2n-1+2\al)}.
$$
Then as $n\rightarrow\infty$,
\bea
\mathrm{p}(n,0)&=&-\frac{n}{4}+\frac{1+2 \alpha}{8}+\frac{(1-2 \alpha ) (1+2 \alpha)}{16 n}+\frac{(1-2 \alpha )^2 (1+2 \alpha)}{32 n^2}+\frac{(1-2 \alpha)^3 (1+2 \alpha)}{64 n^3}\nonumber\\[8pt]
&+&\frac{(1-2 \alpha )^4 (1+2 \alpha)}{128 n^4}+\frac{(1-2 \alpha)^5 (1+2 \alpha)}{256 n^5}+O\left(\frac{1}{n^6}\right).\nonumber
\eea
It is easy to see that this result coincides with Theorem \ref{thmp} when $t=0$.
\end{remark}

\section{$t$ Evolution and Painlev\'{e} V}
In this section, we are going to study the evolution of auxiliary quantities in $t$. We will show that the auxiliary quantities $R_n(t)$ and $r_n(t)$ satisfy the coupled Riccati equations. This allows us to establish the relation of our problem with the Painlev\'{e} equations. Moreover, we also obtain the second-order differential equation satisfied by the recurrence coefficient $\bt_n(t)$.

\begin{lemma}\label{lem}
We have
\be\label{id1}
2t\frac{d}{dt}\ln h_n(t)=R_n(t)-2n-1-2\al,
\ee
\be\label{id2}
2t\frac{d}{dt}\mathrm{p}(n,t)=\bt_nR_n(t)-r_n(t)-2\mathrm{p}(n,t).
\ee
\end{lemma}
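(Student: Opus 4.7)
The plan is to differentiate $h_n(t)$ and $\mathrm{p}(n,t)$ with respect to $t$ directly, exploiting $\partial_tw=-x^2w$, and then to repackage the resulting expressions via the recurrence (\ref{rr}) and the supplementary identities of Proposition~2.1.

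For (\ref{id1}), I would start from $h_n(t)=\int_{-1}^{1}P_n^2(x,t)w(x,t)\,dx$ and differentiate under the integral. The polynomial contribution $2\int P_n\partial_tP_n\,w\,dx$ vanishes because $\partial_tP_n$ has degree at most $n-2$ and is therefore orthogonal to $P_n$; the weight contribution gives $-\int x^2P_n^2w\,dx$. Iterating (\ref{rr}) twice produces $x^2P_n=P_{n+2}+(\bt_n+\bt_{n+1})P_n+\bt_n\bt_{n-1}P_{n-2}$, so by orthogonality the integral collapses to $(\bt_n+\bt_{n+1})h_n$. Hence $\partial_t\ln h_n=-(\bt_n+\bt_{n+1})$, and multiplying by $2t$ and applying (\ref{Rnb}) yields (\ref{id1}).

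For (\ref{id2}), I would use the telescoping identity $\mathrm{p}(n,t)=-\sum_{j=0}^{n-1}\bt_j(t)$ from (\ref{sum}) together with (\ref{id1}) just proved. Since $\bt_j=h_j/h_{j-1}$ by (\ref{be2}), we get $2t\,\partial_t\bt_j=\bt_j(R_j-R_{j-1}-2)$; summing then produces
\[
2t\,\partial_t\mathrm{p}(n,t)=-\sum_{j=0}^{n-1}\bt_j\bigl(R_j-R_{j-1}\bigr)-2\mathrm{p}(n,t).
\]
Writing $R_j-R_{j-1}=2-2t(\bt_{j+1}-\bt_{j-1})$ from (\ref{Rnb}) and telescoping $\sum_{j=0}^{n-1}\bt_j(\bt_{j+1}-\bt_{j-1})=\bt_{n-1}\bt_n$ (using $\bt_0=0$), the remaining sum collapses to $-2\mathrm{p}(n,t)-2t\bt_{n-1}\bt_n$. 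The desired identification of this with $-(\bt_nR_n-r_n)-2\mathrm{p}(n,t)$ is then precisely identity (\ref{id3}).

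The main obstacle is this last matching: the direct computation naturally produces the Toda shape $2t\,\partial_t\mathrm{p}(n,t)=2t\bt_n\bt_{n-1}$, while the lemma is stated in the form $\bt_nR_n-r_n-2\mathrm{p}(n,t)$. The bridge is (\ref{id3}), which I would establish in parallel from (\ref{re1}) and (\ref{re3}): writing $\bt_nR_nR_{n-1}=r_n(r_n+2\al)=r_n(R_{n-1}-r_{n-1})$ gives the one-step recursion $\bt_nR_n-r_n=\bt_{n-1}R_{n-2}-r_{n-1}$, and telescoping its successive differences with the help of (\ref{Rnb}) and the initial datum $\bt_0R_0-r_0=0$ yields (\ref{id3}). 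In this way, identity (\ref{id2}) of Lemma~\ref{lem} and the identity (\ref{id3}) come out together.
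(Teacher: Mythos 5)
Your argument is correct, but it takes a genuinely different route from the paper. The paper proves both identities analytically: for (\ref{id1}) it writes $-2tx^2\mathrm{e}^{-tx^2}=x\frac{d}{dx}\mathrm{e}^{-tx^2}$ and integrates by parts, which makes the integral $R_n(t)$ of (\ref{Rnt}) appear directly; for (\ref{id2}) it differentiates the orthogonality relation $\int_{-1}^{1}P_nP_{n-2}w\,dx=0$, uses the recurrence to rewrite $P_{n-2}/h_{n-2}$, and again integrates by parts so that $r_n(t)$ emerges as the integral (\ref{rnt}). In that scheme (\ref{id3}) is a \emph{corollary}, obtained by comparing (\ref{id2}) with the Toda-type relation $\frac{d}{dt}\mathrm{p}(n,t)=\bt_n\bt_{n-1}$ from (\ref{pn}). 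You invert this logic: you first compute the ``easy'' derivatives $\frac{d}{dt}\ln h_n=-(\bt_n+\bt_{n+1})$ and $\frac{d}{dt}\mathrm{p}(n,t)=\bt_n\bt_{n-1}$ from $\pt w=-x^2w$ and the recurrence, and then translate them into the stated forms using the ladder-operator identities — (\ref{Rnb}) for (\ref{id1}), and (\ref{id3}) for (\ref{id2}) — which forces you to supply an independent, purely algebraic proof of (\ref{id3}). That proof is the delicate point, and it does work: (\ref{re3}) together with (\ref{re1}) gives $\bt_nR_n-r_n=-r_nr_{n-1}/R_{n-1}=\bt_{n-1}R_{n-2}-r_{n-1}$, so with $u_n:=\bt_nR_n-r_n$ one gets $u_n-u_{n-1}=-\bt_{n-1}(R_{n-1}-R_{n-2})$, and summing with (\ref{Rnb}), (\ref{sum}) and $u_0=0$ yields $u_n=2\mathrm{p}(n,t)+2t\bt_n\bt_{n-1}$, i.e.\ (\ref{id3}); since (\ref{Rnb}) and (\ref{re1})--(\ref{re3}) come from the compatibility conditions of Section 2 and involve no $t$-derivatives, there is no circularity with the paper's deferred proof of Lemma 2.4. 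What your route buys is conceptual clarity — it exposes (\ref{id1}) and (\ref{id2}) as the Toda equations dressed up by the supplementary conditions — at the cost of leaning on Proposition 2.1; the paper's integration-by-parts route is self-contained and identifies $R_n$ and $r_n$ directly as the integrals defining them. Two small blemishes: the expression ``$-(\bt_nR_n-r_n)-2\mathrm{p}(n,t)$'' near the end has a stray overall sign (the target from (\ref{id2}) is $\bt_nR_n-r_n-2\mathrm{p}(n,t)$), and the telescoping of $\sum_{j}\bt_j(\bt_{j+1}-\bt_{j-1})$ and of $u_n-u_{n-1}$ should be noted to start harmlessly at $j=0$, $n=1$ because $\bt_0=0$ kills the terms involving undefined indices. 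Neither affects the validity of the argument.
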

\begin{proof}
From (\ref{or}) we have
$$
h_{n}(t)=\int_{-1}^{1}P_{n}^2(x,t)(1-x^2)^\al\mathrm{e}^{-tx^2}dx,\qquad n=0,1,2,\ldots.
$$
Taking a derivative with respect to $t$ and multiplying by $2t$ on both sides give us
$$
2t\: h_n'(t)=-2t\int_{-1}^{1}x^2P_{n}^2(x,t)(1-x^2)^\al\mathrm{e}^{-tx^2}dx.
$$
Through integration by parts, and with the aid of the orthogonality relation, we get
$$
2t\: h_n'(t)=-(2n+1+2\al)h_n+2\al\int_{-1}^{1}\frac{1}{1-x^2}P_{n}^2(x,t)(1-x^2)^{\al}\mathrm{e}^{-tx^2}dx.
$$
In view of the definition of $R_n(t)$ in (\ref{Rnt}), we obtain (\ref{id1}).

To proceed, from (\ref{or}) we also have
$$
\int_{-1}^{1}P_{n}(x,t)P_{n-2}(x,t)(1-x^2)^\al\mathrm{e}^{-tx^2}dx=0,\qquad n=1,2,\ldots.
$$
Differentiating the above formula with respect to $t$, we obtain
\be\label{pn}
\frac{d}{dt}\mathrm{p}(n,t)=\frac{1}{h_{n-2}}\int_{-1}^{1}x^2P_{n}(x,t)P_{n-2}(x,t)(1-x^2)^\al\mathrm{e}^{-tx^2}dx.
\ee
Using (\ref{rr}) and (\ref{be2}), we have
\be\label{rr1}
\frac{P_{n-2}(x,t)}{h_{n-2}}=\frac{xP_{n-1}(x,t)}{h_{n-1}}-\frac{P_{n}(x,t)}{h_{n-1}}.
\ee
Inserting (\ref{rr1}) into (\ref{pn}) gives
\bea\label{pn1}
\frac{d}{dt}\mathrm{p}(n,t)&=&\frac{1}{h_{n-1}}\int_{-1}^{1}x^3P_{n}(x,t)P_{n-1}(x,t)(1-x^2)^\al\mathrm{e}^{-tx^2}dx\nonumber\\
&-&\frac{1}{h_{n-1}}\int_{-1}^{1}x^2P_{n}^2(x,t)(1-x^2)^\al\mathrm{e}^{-tx^2}dx.
\eea
By integration by parts and using the formula
$$
\frac{1}{h_{n-1}}\int_{-1}^{1}x^2\frac{d}{dx}P_{n}(x,t)\:P_{n-1}(x,t)(1-x^2)^\al\mathrm{e}^{-tx^2}dx=n\bt_n-2\mathrm{p}(n,t),
$$
we obtain (\ref{id2}) after some elementary computations.
\end{proof}
\begin{remark}\label{rem}
From (\ref{pn}) we also have
\be\label{pnt}
\frac{d}{dt}\mathrm{p}(n,t)=\frac{h_n}{h_{n-2}}=\bt_n\bt_{n-1}.
\ee
Substituting it into (\ref{id2}) produces an identity
$$
\bt_nR_n(t)=r_n(t)+2\mathrm{p}(n,t)+2t\bt_n\bt_{n-1}.
$$
\end{remark}

\begin{proposition}
The auxiliary quantities $R_n(t)$ and $r_n(t)$ satisfy the coupled Riccati equations:
\be\label{ri1}
2t\:r_n'(t)=\frac{2t(r_n^2(t)+2\al\: r_n(t))}{R_n(t)}-(n-r_n(t))R_n(t),
\ee
\be\label{ri2}
2tR_n'(t)=4\al t-R_n^2(t)+(2\al+1-2t)R_n(t)+2(2t+R_n(t))r_n(t).
\ee
\end{proposition}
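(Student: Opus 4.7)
The plan is to derive both Riccati equations from a single auxiliary identity for $\bt_n'(t)$, together with the algebraic relations (\ref{re1})--(\ref{re3}) and the formulas $r_n=n-2t\bt_n$ and $R_n=2n+1+2\al-2t(\bt_n+\bt_{n+1})$ already in hand.

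The first step is to compute $2t\,\bt_n'(t)$. Since $\bt_n=h_n/h_{n-1}$, taking logarithmic derivatives and applying (\ref{id1}) at indices $n$ and $n-1$ yields
\begin{equation}
2t\,\frac{\bt_n'(t)}{\bt_n(t)}=\left(R_n(t)-2n-1-2\al\right)-\left(R_{n-1}(t)-2n+1-2\al\right)=R_n(t)-R_{n-1}(t)-2.
\end{equation}
This is the only ``new'' analytic input; everything else is algebra from the supplementary conditions.

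For (\ref{ri1}), I would differentiate (\ref{re2}) to get $r_n'(t)=-2\bt_n-2t\bt_n'$, multiply by $t$, and substitute the expression for $2t\bt_n'$ from the previous display. The $-4t\bt_n$ and $+4t\bt_n$ terms cancel, leaving $2t\,r_n'(t)=-2t\bt_n(R_n-R_{n-1})$. Then (\ref{re3}) rewrites $2t\bt_n R_{n-1}$ as $2t(r_n^2+2\al r_n)/R_n$, and (\ref{re2}) rewrites $2t\bt_n R_n$ as $(n-r_n)R_n$, producing exactly the claimed right-hand side.

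For (\ref{ri2}), I would differentiate $R_n=2n+1+2\al-2t(\bt_n+\bt_{n+1})$ and replace $2t\bt_n'$ and $2t\bt_{n+1}'$ using the auxiliary identity (applied at $n$ and $n+1$). After cancellation the result collapses to
\begin{equation}
2t\,R_n'(t)=2t\bigl[\bt_n(R_{n-1}-R_n)+\bt_{n+1}(R_n-R_{n+1})\bigr].
\end{equation}
Using (\ref{re3}) at $n$ and at $n+1$ converts $2t\bt_n R_{n-1}$ and $2t\bt_{n+1}R_{n+1}$ into the ratios $2t(r_n^2+2\al r_n)/R_n$ and $2t(r_{n+1}^2+2\al r_{n+1})/R_n$, and (\ref{re2}) at $n$ and $n+1$ handles the remaining $2t\bt_j R_n$ terms. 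The key simplification is then the identity $r_n+r_{n+1}+2\al=R_n$ from (\ref{re1}), which makes the factor $r_n-r_{n+1}+\cdots$ collapse the fraction: the difference $r_n^2-r_{n+1}^2+2\al(r_n-r_{n+1})=(r_n-r_{n+1})R_n$ cancels the $R_n$ in the denominator. Finally eliminating $r_{n+1}$ via $r_{n+1}=R_n-2\al-r_n$ gathers the pieces into $4\al t-R_n^2+(2\al+1-2t)R_n+2(2t+R_n)r_n$.

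The only real obstacle is bookkeeping in the second calculation: one must carefully keep track of the $R_{n-1}$ and $R_{n+1}$ terms introduced by the derivative and ensure they are fully eliminated by (\ref{re3}) before invoking (\ref{re1}); otherwise spurious neighboring indices remain. Everything else is substitution and collection of like terms.
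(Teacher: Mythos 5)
Your proof is correct. For (\ref{ri1}) you take essentially the same route as the paper: both arguments start from $2t\bt_n'=\bt_n(R_n-R_{n-1}-2)$, which follows from (\ref{id1}) and $\bt_n=h_n/h_{n-1}$, differentiate $r_n=n-2t\bt_n$, and then convert $2t\bt_nR_{n-1}$ and $2t\bt_nR_n$ using (\ref{re3}) and (\ref{re2}). For (\ref{ri2}), however, your route is genuinely different from the paper's and is arguably cleaner. The paper obtains (\ref{ri2}) by differentiating the identity $\bt_nR_n=r_n+2\mathrm{p}(n,t)+2t\bt_n\bt_{n-1}$, which requires the extra analytic input $\frac{d}{dt}\mathrm{p}(n,t)=\bt_n\bt_{n-1}$, and after a lengthy elimination it arrives at a product of two factors equal to zero, one of which must be discarded as a spurious algebraic branch. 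You instead differentiate $R_n=2n+1+2\al-2t(\bt_n+\bt_{n+1})$ and apply the same logarithmic-derivative identity at indices $n$ and $n+1$, giving the telescoped form $2tR_n'=2t\bigl[\bt_n(R_{n-1}-R_n)+\bt_{n+1}(R_n-R_{n+1})\bigr]$; then (\ref{re2}) and (\ref{re3}) at both indices, together with the factorization $r_n^2-r_{n+1}^2+2\al(r_n-r_{n+1})=(r_n-r_{n+1})R_n$ coming from (\ref{re1}), collapse this to $2tR_n'=(2t+R_n)(2r_n+2\al-R_n)+R_n$, which is exactly (\ref{ri2}). I have checked that this computation closes. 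Your approach buys a shorter derivation that avoids both the auxiliary identity (\ref{pnt}) and the branch-discarding step, at the mild cost of carrying the neighboring index $n+1$ through the bookkeeping; the paper's approach, while heavier, stays entirely at indices $n$ and $n-1$ and reuses machinery it needs anyway for the later $\mathrm{p}(n,t)$ results.
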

\begin{proof}
In view of (\ref{be2}), we have from (\ref{id1}) that
$$
2t\frac{d}{dt}\ln\bt_n(t)=R_n(t)-R_{n-1}(t)-2.
$$
That is,
\be\label{bp}
2t\bt_n'(t)=\bt_n(R_n(t)-R_{n-1}(t)-2).
\ee
With the aid of (\ref{re2}) and (\ref{re3}), we obtain
\be\label{bp1}
4t^2\bt_n'(t)=(n-r_n(t))(R_n(t)-2)-\frac{2t(r_n^2(t)+2\al\:r_n(t))}{R_n(t)}.
\ee
On the other hand, taking a derivative in (\ref{re2}) gives
\be\label{de}
r_n'(t)=-2\bt_n(t)-2t\bt_n'(t).
\ee
Then
\bea\label{bp2}
2t^2\bt_n'(t)&=&-2t\bt_n(t)-t\: r_n'(t)\nonumber\\
&=&r_n(t)-t\: r_n'(t)-n.
\eea
Substituting (\ref{bp2}) into (\ref{bp1}), we obtain equation (\ref{ri1}).

Next, taking a derivative in (\ref{id3}) and with the aid of (\ref{pnt}), we find
\be\label{eq1}
\bt_n'(t)R_n(t)+\bt_n(t)R_n'(t)=r_n'(t)+2\bt_n(t)\bt_{n-1}(t)+(2t\bt_n(t)\bt_{n-1}(t))'.
\ee
From (\ref{re2}) and (\ref{re1}) we have
$$
2t\bt_{n-1}(t)=n-1-r_{n-1}(t)=n-1+2\al+r_n(t)-R_{n-1}(t).
$$
It follows that
\be\label{bb1}
2t\bt_n(t)\bt_{n-1}(t)=(n-1+2\al+r_n(t))\bt_n(t)-\frac{r_n^2(t)+2\al\:r_n(t)}{R_n(t)}
\ee
and
\be\label{bb2}
4t^2\bt_n(t)\bt_{n-1}(t)=(n-1+2\al+r_n(t))(n-r_n(t))-\frac{2t(r_n^2(t)+2\al\:r_n(t))}{R_n(t)},
\ee
where use has been made of (\ref{re3}) and (\ref{re2}).

Taking a derivative in (\ref{bb1}) and inserting it into (\ref{eq1}), and then multiplying by $4t^2$ on both sides, we find
\bea
4t^2\bt_n'(t)R_n(t)+4t^2\bt_n(t)R_n'(t)&=&4t^2r_n'(t)+8t^2\bt_n(t)\bt_{n-1}(t)+4t^2r_n'(t)\bt_n(t)\nonumber\\
&+&(n-1+2\al+r_n(t))4t^2\bt_n'(t)-4t^2\left(\frac{r_n^2(t)+2\al\:r_n(t)}{R_n(t)}\right)'.\nonumber
\eea
Substituting (\ref{bp1}) and (\ref{bb2}) into the above equation, and then using (\ref{re2}) to eliminate $\bt_n(t)$, we get an equation satisfied by $r_n(t),\; r_n'(t),\; R_n(t)$ and $R_n'(t)$. Finally by making use of (\ref{ri1}) to eliminate $r_n'(t)$, we obtain the following equation after some simplifications:
\bea
&&\left[n R_n^2(t)-2 t\: r_n^2(t)-(R_n^2(t)+4 \alpha  t)r_n(t) \right]\nonumber\\
&\times&\left[2 t R_n'(t)+R_n^2(t)+ (2 t-1-2\al)R_n(t)-2 r_n(t) (R_n(t)+2 t)-4 \alpha  t\right]=0.\nonumber
\eea
Obviously, this leads to two equations, one of which is an algebraic equation and does not hold. So we should discard this algebraic equation and then we arrive at (\ref{ri2}). The proof is complete.
\end{proof}

\begin{theorem}
Let
\be\label{tr}
W_n(t):=1+\frac{2t}{R_n(t)}.
\ee
Then $W_n(t)$ satisfies the Painlev\'{e} V equation \cite{Gromak}
\be\label{pv}
W_{n}''=\frac{(3 W_n-1) (W_n')^2}{2W_n (W_n-1) }-\frac{W_n'}{t}+\frac{(W_n-1)^2 }{t^2}\left(\mu_1 W_n +\frac{\mu_2}{W_n}\right)+\frac{\mu_3 W_n}{t}+\frac{\mu_4 W_n(W_n+1) }{W_n-1},
\ee
with the parameters
$$
\mu_{1}=\frac{\alpha ^2}{2},\qquad \mu_2=-\frac{1}{8},\qquad \mu_3=\frac{1}{2} (2 n+1+2 \alpha),\qquad \mu_4=-\frac{1}{2}.
$$
The initial conditions are
$$
W_n(0)=1,\qquad W_n'(0)=\frac{2}{2n+1+2\al}.
$$
\end{theorem}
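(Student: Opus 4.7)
The plan is to derive a second-order ODE for $R_n(t)$ by eliminating $r_n(t)$ from the coupled Riccati system (\ref{ri1})--(\ref{ri2}), and then show that the substitution $W_n = 1 + 2t/R_n$ converts this ODE into the Painlev\'{e} V equation with the stated parameters.

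First I would observe that the Riccati equation (\ref{ri2}) is algebraic in $r_n(t)$, so it can be solved explicitly to give
$$
r_n(t) = \frac{2t R_n'(t) + R_n^2(t) + (2t - 1 - 2\alpha) R_n(t) - 4\alpha t}{2\bigl(R_n(t) + 2t\bigr)}.
$$
Substituting this expression (and its derivative) into (\ref{ri1}) yields, after clearing the denominator $R_n(t) + 2t$, a second-order nonlinear ODE for $R_n(t)$ alone, with the general structure of a Painlev\'{e}-type equation (quadratic in $R_n''$-free terms and in $(R_n')^2$).

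Next, I would introduce the change of dependent variable $W_n(t) = 1 + 2t/R_n(t)$, equivalently $R_n = 2t/(W_n - 1)$, and compute
$$
R_n' = \frac{2}{W_n - 1} - \frac{2t\,W_n'}{(W_n - 1)^2},\qquad R_n'' = -\frac{4W_n'}{(W_n - 1)^2} + \frac{4t (W_n')^2}{(W_n - 1)^3} - \frac{2t\,W_n''}{(W_n - 1)^2}.
$$
Inserting these into the ODE for $R_n$ and grouping terms according to the canonical Painlev\'{e} V form
$$
W_n'' = \frac{(3W_n - 1)(W_n')^2}{2W_n(W_n - 1)} - \frac{W_n'}{t} + \frac{(W_n - 1)^2}{t^2}\!\left(\mu_1 W_n + \frac{\mu_2}{W_n}\right) + \frac{\mu_3 W_n}{t} + \frac{\mu_4 W_n(W_n + 1)}{W_n - 1},
$$
I would match coefficients and read off $\mu_1 = \alpha^2/2$, $\mu_2 = -1/8$, $\mu_3 = (2n+1+2\alpha)/2$, $\mu_4 = -1/2$. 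This algebraic matching is the main obstacle: the substitution produces a large rational expression, and verifying that every coefficient aligns exactly with the canonical form is delicate. I would organize the computation by first checking the two $(W_n')^2$ and $W_n'/t$ terms (which fix the gauge of the substitution), then the terms of $W_n$ and $1/W_n$ weighted by $1/t^2$ (which determine $\mu_1$ and $\mu_2$), and finally the remaining terms to identify $\mu_3$ and $\mu_4$.

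Finally, for the initial conditions, I would use (\ref{Rn0}) to get $R_n(0) = 2n + 1 + 2\alpha$, which immediately gives $W_n(0) = 1$. Differentiating $W_n = 1 + 2t/R_n$ yields $W_n' = 2/R_n - 2t R_n'/R_n^2$, so $W_n'(0) = 2/R_n(0) = 2/(2n+1+2\alpha)$, as required. (One can independently verify $R_n'(0)$ is finite by evaluating (\ref{ri2}) at $t=0$ together with (\ref{Rn0}) and (\ref{rn0}).)
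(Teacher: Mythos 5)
Your proposal is correct and follows essentially the same route as the paper: eliminate $r_n(t)$ from the coupled Riccati pair (solving the one that is linear in $r_n$, namely (\ref{ri2}), and substituting into (\ref{ri1})) to get a second-order ODE for $R_n(t)$, then apply the transformation $W_n=1+2t/R_n$ and match against the canonical Painlev\'{e} V form; the initial conditions are handled identically via $R_n(0)=2n+1+2\alpha$. The only difference is presentational: the paper records the intermediate ODE for $R_n(t)$ explicitly, while you leave it implicit.
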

\begin{proof}
Eliminating $r_n(t)$ from the coupled Riccati equations (\ref{ri1}) and (\ref{ri2}), we obtain the second-order differential equation satisfied by $R_n(t)$:
\bea
&&4 t^2 R_n (R_n+2 t)R_n''-4 t^2 (R_n+t)(R_n')^2+4 t R_n^2 R_n'-R_n^5+(2n+1+2 \alpha-5 t)R_n^4\nonumber\\
&+&4t(2n+1+2 \alpha-2 t)R_n^3+t \left[4 \alpha ^2+3-4 t^2+4t (2 n+1+2 \alpha)\right]R_n^2+16 \alpha ^2 t^2 R_n+16 \alpha ^2 t^3=0.\nonumber
\eea
With the transformation (\ref{tr}), we arrive at (\ref{pv}). The initial conditions follow from (\ref{Rn0}).
\end{proof}

\begin{theorem}\label{thmb}
The recurrence coefficient $\bt_n(t)$ satisfies the following second-order nonlinear ordinary differential equation
\begin{small}
\bea\label{btde}
&&\big\{8 t^2 ( t \beta_n'+ \beta_n) (t \beta_n ''+2\beta_n ')+4t( 2 t-2n+1-2 \alpha+4t\bt_n) ( t \beta_n'+ \beta_n)^2-4 t( t \beta_n'+ \beta_n)\nonumber\\
&\times& \left[2 n\alpha +2( n-2 \alpha)  (n-2 t \beta_n)-3 (n-2 t \beta_n)^2\right]+4 (n-2 t \beta_n)^4-4  (n-3 \alpha+t)(n-2 t \beta_n)^3\nonumber\\
&+&4 \left[n (t-3 \alpha )-2 \alpha  (t-\alpha)\right](n-2 t \beta_n)^2+8 n\alpha  (t-\alpha )(n-2 t \beta_n)\big\}^2\nonumber\\
&=&\left[2 t ( t \beta_n ''+2 \beta_n ')+(2t-2\al+1-2r_n) ( t \beta_n'+ \beta_n)+3 (n-2 t \beta_n)^2-2 ( n-2 \alpha ) (n-2 t \beta_n)-2 n\alpha\right]^2\nonumber\\
&\times&16t^2\left[ \beta_n(n-2 t \beta_n) (n+2 \alpha -2 t \beta_n)+ ( t \beta_n'+ \beta_n)^2\right],
\eea
with the initial conditions
$$
\bt_n(0)=\frac{n(n+2\al)}{(2n-1+2\al)(2n+1+2\al)},\\[8pt]
$$
$$
\bt_n'(0)=\frac{4n(n+\al)(n+2\al)(1-4\al^2)}{(2n-1+2\al)^2(2n+1+2\al)^2(2n-3+2\al)(2n+3+2\al)}.
$$
\end{small}
\end{theorem}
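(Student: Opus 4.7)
The plan is to eliminate the auxiliary quantities $R_n(t)$ and $r_n(t)$ from the coupled Riccati system (\ref{ri1})--(\ref{ri2}), leaving a closed second-order ODE in $\bt_n(t)$ alone. Because (\ref{re2}) gives $r_n = n - 2t\bt_n$ and hence $r_n' = -2\bt_n - 2t\bt_n'$, the quantity $r_n$ is removed by direct substitution at the outset; the real task is to eliminate $R_n$ and $R_n'$.

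First I would feed (\ref{re2}) into (\ref{ri1}) and clear the denominator to obtain the purely algebraic identity
\[
\bt_n R_n^2 - 2(\bt_n + t\bt_n')R_n - (n-2t\bt_n)(n+2\al-2t\bt_n) = 0,
\]
a quadratic in $R_n$ whose discriminant equals $4\bigl[(t\bt_n'+\bt_n)^2 + \bt_n(n-2t\bt_n)(n+2\al-2t\bt_n)\bigr]$; note that the square bracket here is precisely the last factor on the right-hand side of (\ref{btde}). Solving the quadratic yields
\[
\bt_n R_n \;=\; (t\bt_n'+\bt_n) \;\pm\; \sqrt{(t\bt_n'+\bt_n)^2 + \bt_n(n-2t\bt_n)(n+2\al-2t\bt_n)}.
\]

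Next I would differentiate the quadratic above with respect to $t$, producing a rational expression for $R_n'$ in $R_n$ and $\bt_n,\bt_n',\bt_n'',t$. Equating this with the $R_n'$ supplied by (\ref{ri2}) (after substituting $r_n = n-2t\bt_n$) yields a polynomial equation in $R_n$; using the quadratic to reduce higher powers of $R_n$ collapses it to a \emph{linear} relation $\mathcal{P}\cdot \bt_n R_n = \mathcal{Q}$ for explicit polynomials $\mathcal{P},\mathcal{Q}$ in $\bt_n,\bt_n',\bt_n'',t,n,\al$. Inserting the radical expression for $\bt_n R_n$ into this linear relation gives
\[
\mathcal{Q} - \mathcal{P}(t\bt_n'+\bt_n) \;=\; \pm\,\mathcal{P}\sqrt{(t\bt_n'+\bt_n)^2 + \bt_n(n-2t\bt_n)(n+2\al-2t\bt_n)},
\]
and squaring removes the $\pm$ to give $(\mathcal{Q}-\mathcal{P}(t\bt_n'+\bt_n))^2 = \mathcal{P}^2\bigl[(t\bt_n'+\bt_n)^2 + \bt_n(n-2t\bt_n)(n+2\al-2t\bt_n)\bigr]$. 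Matching against (\ref{btde}), $\mathcal{P}$ turns out to be $4t$ times the first bracketed factor on the right of (\ref{btde}), and $\mathcal{Q}-\mathcal{P}(t\bt_n'+\bt_n)$ is the big bracketed expression whose square forms the left-hand side; this recovers (\ref{btde}) exactly.

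The main obstacle is the sheer algebraic bulk of the elimination and the final polynomial matching: constructing $\mathcal{P}$ and $\mathcal{Q}$, reducing modulo the quadratic, and verifying the identity term-by-term is a computation best handled by symbolic algebra. The initial value $\bt_n(0)$ is the classical symmetric Jacobi value (\ref{btc}). To obtain $\bt_n'(0)$, divide (\ref{bp}) by $2t$ and let $t\to 0$; because $R_n(0)-R_{n-1}(0)-2 = 0$ by (\ref{Rn0}), L'H\^{o}pital's rule gives $\bt_n'(0) = \tfrac{1}{2}\bt_n(0)\bigl(R_n'(0)-R_{n-1}'(0)\bigr)$. Differentiating (\ref{Rnb}) at $t=0$ yields $R_n'(0) = -2(\bt_n(0)+\bt_{n+1}(0))$, hence $\bt_n'(0) = \bt_n(0)\bigl(\bt_{n-1}(0)-\bt_{n+1}(0)\bigr)$; substituting the explicit expressions from (\ref{btc}) and simplifying produces the announced formula.
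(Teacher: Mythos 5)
Your proposal is correct and follows essentially the same route as the paper: the paper likewise turns (\ref{ri1}) into a quadratic in $R_n(t)$, solves it (producing the radical whose square is the discriminant appearing on the right of (\ref{btde})), substitutes into (\ref{ri2}), and squares away the root, the only cosmetic difference being that the paper performs the elimination in the variable $r_n$ and converts to $\bt_n$ via (\ref{re2}) at the end rather than at the start. Your derivation of $\bt_n'(0)$ by L'H\^{o}pital on (\ref{bp}) is a valid alternative to the paper's one-line argument from (\ref{be1}) and (\ref{pnt}), which gives $\bt_n'=\bt_n(\bt_{n-1}-\bt_{n+1})$ directly; both yield the stated value.
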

\begin{proof}
Solving for $R_n(t)$ from equation (\ref{ri1}), we get two solutions. Substituting either solution into (\ref{ri2}) and then removing the square root, we obtain the second-order differential equation satisfied by $r_n(t)$:
\begin{small}
\bea
&&\big\{2 t^2 r_n' r_n''+t(2 t+1-2 \alpha-2 r_n) (r_n')^2+2 t \left[2 n\alpha +2( n-2 \alpha)  r_n-3 r_n^2\right]r_n'+4 r_n^4\nonumber\\
&-&4  (n-3 \alpha+t)r_n^3+4 \left[n (t-3 \alpha )-2 \alpha  (t-\alpha)\right]r_n^2+8 n\alpha  (t-\alpha )r_n\big\}^2\nonumber\\
&=&t\left[2 r_n (n-r_n) (2 \alpha +r_n)+t (r_n')^2\right]\left[2 t r_n''+(2t-2\al+1-2r_n) r_n'-6 r_n^2+4 ( n-2 \alpha ) r_n+4 n\alpha\right]^2.\nonumber
\eea
\end{small}
In view of the relation (\ref{re2}), we obtain (\ref{btde}). The value of $\bt_n(0)$ is given by (\ref{btc}), and $\bt_n'(0)$ follows from (\ref{be1}) and (\ref{pnt}).
\end{proof}

\section{The Hankel Determinant and its Asymptotics}
In this section, we consider the Hankel determinant $D_n(t)$ for the semi-classical Jacobi weight. We first define a quantity
\be\label{def}
H_n(t):=\sum_{j=0}^{n-1}R_j(t).
\ee
From (\ref{hankel}) and (\ref{id1}) we have
\bea
2t\frac{d}{dt}\ln D_n(t)&=&\sum_{j=0}^{n-1}2t\frac{d}{dt}\ln h_j(t)\nonumber\\
&=&\sum_{j=0}^{n-1}R_j(t)-n(n+2\al).\nonumber
\eea
Hence, $H_n(t)$ is related to the logarithmic derivative of the Hankel determinant as follows:
\be\label{hn}
H_n(t)=n(n+2\al)+2t\frac{d}{dt}\ln D_n(t).
\ee

In random matrix theory, the logarithmic derivatives of the Hankel determinants are very important, and they usually satisfy certain nonlinear second-order differential and difference equations. Sometimes these equations can be transformed to the Jimbo-Miwa-Okamoto $\s$-forms of the Painlev\'{e} equations and its discrete $\s$-forms. See \cite{BCE,ChenIts,Min2019,Min2020} for reference.
\begin{theorem}
The quantity $H_n(t)$ satisfies the following second-order differential equation
\begin{small}
\bea\label{ode}
&&\big\{4 t^3 (H_n'')^2+4 t^2 H_n'' \left[H_n'-2 (t+\alpha)\right]+8 t^2 (H_n')^3-t (H_n')^2 \left[4 H_n+4 (t+\al)^2-32nt-1\right]\nonumber\\
&-&4tH_n' \left[4  (2n+\alpha+t)H_n+ (3t+\alpha) (4 nt +4n\alpha+1)\right]+8 (n+\alpha+t) H_n^2\nonumber\\
&+&4H_n \left[2t^3+6 t^2 (n+\alpha)+ t (8 n\alpha+6 \alpha ^2+1)+2 \alpha ^2 (n+\alpha)\right]+8 t (t+\alpha)^2 \left[2 n(t+\al)+1\right]\big\}^2\nonumber\\
&=&16 \left[H_n-2 t H_n'+(t+\alpha)^2\right]\big\{2 t^2 H_n''+t (4 H_n+8 n t+1) H_n'-2 H_n^2-2 H_n \left[2 n t+(t+\alpha)^2\right]\nonumber\\
&-&2 t (t+\alpha) \left[2 n (t+\alpha)+1\right]\big\}^2,
\eea
with the initial conditions
$$
H_n(0)=n(n+2\al),\qquad H_n'(0)=-\frac{2n(2n^2+2n\al-1)}{(2n-1+2\al)(2n+1+2\al)}.
$$
\end{small}
\end{theorem}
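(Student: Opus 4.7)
The plan is to first collapse the sum defining $H_n$ into a local expression in the auxiliary quantities $R_n, r_n$, then exploit the coupled Riccati system (\ref{ri1})--(\ref{ri2}) to eliminate these unknowns in favor of $H_n$ and its derivatives.

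The starting point is that identity (\ref{re4}) is already the required telescoping relation: $H_n = 2t\bt_n(R_n+R_{n-1}) + r_n^2 - 2(t-\al)r_n$. Substituting $\bt_n = (n-r_n)/(2t)$ from (\ref{re2}) and $\bt_n R_{n-1} = r_n(r_n+2\al)/R_n$ from (\ref{re3}), and then using (\ref{ri1}) to absorb $2tr_n(r_n+2\al)/R_n$ into $2tr_n' + (n-r_n)R_n$, I will obtain the clean form
\[
H_n = 2(n-r_n)R_n + 2tr_n' + r_n^2 - 2(t-\al)r_n. \qquad \text{(A)}
\]
Differentiating (A) in $t$ and substituting $R_n'$ from (\ref{ri2}) together with $r_n'$ from (\ref{ri1}) should produce a substantial cancellation---the bracket multiplying $r_n'$ collapses to $-1$---yielding the identity
\[
(n-r_n)R_n = t(H_n' - r_n' + 2r_n). \qquad \text{(C)}
\]
Substituting (C) back into (A) eliminates $R_n$ entirely, giving the quadratic in $r_n$
\[
H_n - 2tH_n' = r_n^2 + 2(t+\al)r_n, \qquad \text{(D)}
\]
whose solution is $r_n + t + \al = \pm\sqrt{H_n - 2tH_n' + (t+\al)^2}$.

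Next, equating the expression for $R_n$ from (C) with that obtained by substituting (C) back into (\ref{ri1}) will yield a second algebraic relation
\[
2r_n(r_n+2\al)(n-r_n) = t\bigl[(H_n'+2r_n)^2 - (r_n')^2\bigr]. \qquad \text{(G)}
\]
Differentiating (D) gives $r_n'$ as an explicit rational function of $r_n, H_n', H_n''$, namely $r_n' = -[H_n'+2r_n+2tH_n'']/[2(r_n+t+\al)]$. Substituting this into (G), using (D) to rewrite $r_n(r_n+2\al)$ as $H_n - 2tH_n' - 2tr_n$, and clearing the denominator $2(r_n+t+\al)$, the resulting expression is linear in the remaining $r_n$ modulo the factor $\sqrt{H_n - 2tH_n' + (t+\al)^2}$. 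Isolating this square root and squaring should produce the ODE (\ref{ode}); the squared structure visible on both sides of (\ref{ode}) is precisely the algebraic signature of this final squaring step, with the factor $H_n - 2tH_n' + (t+\al)^2$ on the right-hand side corresponding exactly to the discriminant of the quadratic (D).

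For the initial conditions, $R_j(0) = 2j+1+2\al$ from (\ref{Rn0}) gives $H_n(0) = n(n+2\al)$ at once. For $H_n'(0)$, dividing (C) by $t$ and passing to $t\to 0^+$, where L'H\^{o}pital's rule applied to $(n-r_n)/t$ yields $-r_n'(0) = 2\bt_n(0)$ via (\ref{de}), I obtain $H_n'(0) = -2(n+\al)r_n'(0) - 2n = 4(n+\al)\bt_n(0) - 2n$; inserting the classical value (\ref{btc}) for $\bt_n(0)$ and simplifying produces the stated expression. The principal obstacle is the ``miraculous'' cancellation leading to (C); beyond that, the difficulty is purely algebraic, reducing to tracking the sign ambiguity of the quantity $u := r_n + t + \al$ through the elimination while exploiting $u^2 = H_n - 2tH_n' + (t+\al)^2$ to keep all polynomial expressions in $u$ of at most linear order before the final squaring. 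This strategy parallels the derivation of (\ref{btde}) in Theorem \ref{thmb}, where an analogous squaring was used to eliminate a surd arising from the same coupled Riccati system.
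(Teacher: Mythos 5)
Your proposal is correct, and its skeleton ultimately coincides with the paper's: both arguments funnel into the quadratic $r_n^2+2(t+\al)r_n+2tH_n'-H_n=0$ (your (D), the paper's (\ref{eq4})) together with a companion relation quadratic in $r_n'$ --- your (G) is exactly the paper's (\ref{pro}) after (\ref{pro}) is simplified with (D), since $H_n-r_n^2+2(t-\al)r_n=2t(H_n'+2r_n)$ there --- and both finish by isolating and squaring the surd $\sqrt{H_n-2tH_n'+(t+\al)^2}$. Where you genuinely diverge is in how the linear relation (C), equivalently $2\bt_nR_n=H_n'+2r_n-r_n'$, is obtained: the paper gets it from the telescoped identity (\ref{pnr}) for the sub-leading coefficient combined with the Toda-type evolution (\ref{id2}), whereas you extract it from the coupled Riccati system alone by differentiating (A). I checked that your ``miraculous'' cancellation does occur, with one caveat: you should first use (\ref{ri1}) to rewrite (A) as $H_n=(n-r_n)R_n+2t\,r_n(r_n+2\al)/R_n+r_n^2-2(t-\al)r_n$ \emph{before} differentiating, since differentiating (A) as written produces an $r_n''$ from the $2tr_n'$ term that your outline does not address. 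In the rewritten form the coefficient of $R_n'$ collapses to $-2tr_n'/R_n$ by (\ref{ri1}), substituting (\ref{ri2}) then kills all the $r_n'/R_n$ terms and leaves $H_n'=-r_n'-2r_n+2r_n(r_n+2\al)/R_n$, and one further application of (\ref{ri1}) yields precisely (C); your initial-condition computation via L'H\^{o}pital and (\ref{de}) also checks out against the stated value. The trade-off is clear: your route is self-contained within the Riccati system of Section 4 and never invokes $\mathrm{p}(n,t)$, while the paper's route avoids the delicate second-derivative bookkeeping by importing two structural identities whose combination makes the disappearance of $r_n'$ from (\ref{eq4}) immediate.
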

\begin{proof}
With the definition (\ref{def}), we write equation (\ref{re4}) as
\be\label{equ1}
2t\bt_nR_n(t)+2t\bt_n R_{n-1}(t)=2(t-\al)r_n(t)-r_n^2(t)+H_n(t).
\ee
On the other hand, from (\ref{bp}) we have
\be\label{equ2}
2t\bt_nR_n(t)-2t\bt_n R_{n-1}(t)=4t\bt_n+4t^2\bt_n'=- 2t\: r_n'(t).
\ee
where use has been made of (\ref{de}) in the second equality.

The sum and difference of (\ref{equ1}) and (\ref{equ2}) give us
\be\label{equ3}
4t\bt_nR_n(t)=2(t-\al)r_n(t)-r_n^2(t)+H_n(t)- 2t\: r_n'(t)
\ee
and
\be\label{equ4}
4t\bt_nR_{n-1}(t)=2(t-\al)r_n(t)-r_n^2(t)+H_n(t)+ 2t\: r_n'(t),
\ee
respectively.
The product of (\ref{equ3}) and (\ref{equ4}) produces
\be\label{pro}
8t(n-r_n(t))(r_n^2(t)+2\al r_n(t))=\left[2(t-\al)r_n(t)-r_n^2(t)+H_n(t)\right]^2-4t^2 (r_n'(t))^2,
\ee
where we have used (\ref{re3}) and (\ref{re2}).

Next, replacing $n$ by $j$ in (\ref{re1}) and taking a sum from $j=0$ to $j=n-1$, we have
$$
2\sum_{j=0}^{n-1}r_j(t)+r_n(t)=\sum_{j=0}^{n-1}R_j(t)-2n\al.
$$
Taking account of (\ref{re2}) and (\ref{def}), it follows that
$$
n(n-1)-4t\sum_{j=0}^{n-1}\bt_j+r_n(t)=H_n(t)-2n\al.
$$
Using (\ref{sum}), we obtain
\be\label{pnr}
4t\:\mathrm{p}(n,t)=H_n(t)-r_n(t)-n(n-1+2\al).
\ee
Taking a derivative with respect to $t$ gives
\be\label{eq2}
4\mathrm{p}(n,t)+4t\frac{d}{dt}\mathrm{p}(n,t)=H_n'(t)-r_n'(t).
\ee

On the other hand, we have from (\ref{id2}) that
\be\label{eq3}
4\mathrm{p}(n,t)+4t\frac{d}{dt}\mathrm{p}(n,t)=2\bt_nR_n(t)-2r_n(t).
\ee
The combination of (\ref{eq2}) and (\ref{eq3}) shows
$$
2\bt_nR_n(t)=H_n'(t)+2r_n(t)-r_n'(t).
$$
Substituting it into (\ref{equ3}), we obtain
\be\label{eq4}
r_n^2(t)+2(t+\al)r_n(t)+2t H_n'(t)-H_n(t)=0.
\ee
Note that the terms involving $r_n'(t)$ disappear!
Then, equation (\ref{eq4}) can be viewed as a quadratic equation for $r_n(t)$, which has two solutions
$$
r_n(t)=-t-\alpha \pm\sqrt{(t+\al)^2+H_n(t)-2 t H_n'(t)}.
$$
Substituting either solution into (\ref{pro}) and clearing the square root, we obtain equation (\ref{ode}) after some simplifications. The value of $H_n(0)$ follows from (\ref{def}) or (\ref{hn}); $H_n'(0)$ is from the combination of (\ref{def}), (\ref{re1}) and (\ref{re2}) with the aid of the values of $\bt_n(0)$ and $\bt_n'(0)$ in Theorem \ref{thmb}.
\end{proof}
\begin{remark}
Although we can not transform equation (\ref{ode}) into the $\s$-form of a certain Painlev\'{e} equation directly, we suppose that $H_n(t)$ can be expressed as the sum of two $\s$-functions of Painlev\'{e} V with different parameters. This phenomenon always occurs in the symmetric weight problems; see \cite{Basor2015,Min2020b,Min2021,MLC} for reference.
\end{remark}

\begin{theorem}
The quantity $H_n(t)$ satisfies the following second-order difference equation
\begin{small}
\bea\label{hnd}
0&=&(2t+H_n-H_{n-1})^2(2t+H_{n+1}-H_{n})^2(H_n+nH_{n-1}-nH_{n+1})\nonumber\\
&+&\big[n(2t+H_n-H_{n-1})(2t+H_{n+1}-H_{n})-2t(2nt+H_n)\big]\nonumber\\
&\times&\big[(2t+H_n-H_{n-1})(2t+H_{n+1}-H_{n})(2t-n-2\al+H_{n+1}-H_{n-1})+2t(2nt+H_n)\big],
\eea
with the initial conditions
$$
H_1(t)=\frac{(1+2\al)\Phi\left(\frac{1}{2},\frac{1}{2}+\al;-t\right)}{\Phi\left(\frac{1}{2},\frac{3}{2}+\al;-t\right)},\\[8pt]
$$
$$
H_2(t)=\frac{(1+2\al)\Phi\left(\frac{1}{2},\frac{1}{2}+\al;-t\right)}{\Phi\left(\frac{1}{2},\frac{3}{2}+\al;-t\right)}
+\frac{(3+2\al)\Phi\left(\frac{3}{2},\frac{3}{2}+\al;-t\right)}{\Phi\left(\frac{3}{2},\frac{5}{2}+\al;-t\right)}.
$$
\end{small}
\end{theorem}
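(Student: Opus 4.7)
The overall strategy is to eliminate the auxiliary quantities $r_n(t)$ and $\bt_n(t)$ from the identities of Proposition~2.2, reducing everything to a polynomial identity in $R_n, R_{n-1}, H_n$. Since $R_n = H_{n+1}-H_n$ and $R_{n-1} = H_n-H_{n-1}$ by the definition $H_n(t)=\sum_{j=0}^{n-1}R_j(t)$, any such identity is automatically a difference equation for $H_n$ in three consecutive indices.

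The plan is first to extract two independent quadratic equations in $r_n$ whose coefficients involve only $R_n, R_{n-1}, H_n$ and constants. Substituting $\sum_{j=0}^{n-1}R_j = H_n$ and $2t\bt_n = n - r_n$ (from (\ref{re2})) into the supplementary identity (\ref{re4}) yields
\[
r_n^2 - (R_n + R_{n-1} + 2t - 2\al)\, r_n + n(R_n + R_{n-1}) - H_n = 0, \qquad (\ast)
\]
while multiplying (\ref{re3}) by $2t$ and again applying (\ref{re2}) produces
\[
2t\, r_n^2 + (R_n R_{n-1} + 4\al t)\, r_n - n R_n R_{n-1} = 0. \qquad (Q)
\]

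The combination $(Q) - 2t(\ast)$ annihilates the $r_n^2$ term; the remaining coefficients factor, using $R_n R_{n-1} + 2t(R_n + R_{n-1}) + 4t^2 = (R_n + 2t)(R_{n-1} + 2t)$, to give the linear relation
\[
(n - r_n)(R_n + 2t)(R_{n-1} + 2t) = 2t(2nt + H_n). \qquad (L)
\]
Solving $(L)$ for $r_n$ and substituting back into $(\ast)$---equivalently, multiplying $(\ast)$ through by $(R_n + 2t)^2(R_{n-1} + 2t)^2$ and replacing the resulting $r_n(R_n + 2t)(R_{n-1} + 2t)$ by the right-hand side of $(L)$---produces a polynomial identity in $R_n, R_{n-1}, H_n$ alone. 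After rewriting $R_n + R_{n-1} = H_{n+1} - H_{n-1}$ and regrouping, this coincides with (\ref{hnd}). The final simplification is routine polynomial algebra, and the main obstacle is the bookkeeping; introducing $U = R_n + 2t$, $V = R_{n-1} + 2t$, $N = 2nt + H_n$ recasts (\ref{hnd}) compactly as
\[
[n(n+2\al) - N]\, U^2 V^2 + 2tN\, UV\,(U + V - 2n - 2\al - 2t) + 4t^2 N^2 = 0,
\]
which makes the comparison with the substituted form of $(\ast)$ transparent.

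For the initial conditions, I use $H_1(t) = R_0(t)$ and $H_2(t) = R_0(t) + R_1(t)$ directly. Starting from (\ref{Rnt}) with $P_0(x) = 1$ and $P_1(x) = x$, the substitution $u = x^2$ reduces $R_0(t)$ and $R_1(t)$ to $\frac{2\al}{h_0}\int_0^1 u^{-1/2}(1-u)^{\al-1}\mathrm{e}^{-tu}\,du$ and $\frac{2\al}{h_1}\int_0^1 u^{1/2}(1-u)^{\al-1}\mathrm{e}^{-tu}\,du$, where $h_0 = \mu_0$ and $h_1 = \mu_2$ are given by (\ref{mom}). The integral representation of $\Phi(a,b;z)$ supplied in the introduction evaluates each of these in closed form, and the stated expressions for $H_1(t)$ and $H_2(t)$ follow after elementary Gamma-function simplifications.
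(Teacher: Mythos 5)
Your proposal is correct, and at its core it is the same elimination scheme as the paper's: produce one relation that is \emph{linear} in $r_n$ and one that is \emph{quadratic} in $r_n$, both with coefficients depending only on $H_{n-1},H_n,H_{n+1}$, then eliminate $r_n$. Your linear relation $(L)$ is exactly the paper's (\ref{rne}) (obtained there by summing (\ref{re3}) and (\ref{re4}) to kill $r_n^2$ and then multiplying by $2t$), and your quadratic $(\ast)$ is term-for-term the paper's relation $H_n+nH_{n-1}-nH_{n+1}+r_n(2t-2\al-r_n+H_{n+1}-H_{n-1})=0$. The one genuine difference is how the quadratic is reached: the paper detours through the sub-leading coefficient $\mathrm{p}(n,t)$, using (\ref{id3}) (a $t$-evolution identity from Section 4) together with (\ref{pnr}) and (\ref{re1}), whereas you read it off directly from (\ref{re4}) after substituting $2t\bt_n=n-r_n$ and $\sum_{j=0}^{n-1}R_j=H_n$. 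Your route is more self-contained --- it uses only Proposition 2.2 and the definition of $H_n$, with no input from the differential (in $t$) structure --- and your compact form $[n(n+2\al)-N]U^2V^2+2tNUV(U+V-2n-2\al-2t)+4t^2N^2=0$ is precisely $(-1)$ times (\ref{hnd}) after substituting $U=2t+H_{n+1}-H_n$, $V=2t+H_n-H_{n-1}$, $N=2nt+H_n$, so the final equations agree. The computation of the initial conditions from (\ref{Rnt}) via the integral representation of $\Phi$ also checks out (the Gamma-factor bookkeeping gives exactly the prefactors $1+2\al$ and $3+2\al$); the only caveat, shared with the paper, is that the intermediate integrals converge only for $\al>0$ and the general case follows by analytic continuation.
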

\begin{proof}
From (\ref{def}) we have
\be\label{r1}
R_n(t)=H_{n+1}(t)-H_n(t)
\ee
and
\be\label{r2}
R_{n-1}(t)=H_{n}(t)-H_{n-1}(t).
\ee
Taking a sum of (\ref{re3}) and (\ref{re4}) to eliminate $r_n^2(t)$, and in view of (\ref{def}), we get
$$
2t\: r_n(t)+H_n(t)=\bt_nR_n(t)R_{n-1}(t)+2t\bt_n(R_n(t)+R_{n-1}(t)).
$$
Multiplying by $2t$ on both sides and eliminating $\bt_n$ with the aid of (\ref{re2}) give us
$$
2t(2t\: r_n(t)+H_n(t))=(n-r_n(t))R_n(t)R_{n-1}(t)+2t(n-r_n(t))(R_n(t)+R_{n-1}(t)).
$$
Substituting (\ref{r1}) and (\ref{r2}) into the above equation, we can then express $r_n(t)$ in terms of $H_{n+1}(t), H_n(t)$ and $H_{n-1}(t)$:
\be\label{rne}
r_n(t)=\frac{n(2t+H_{n}(t)-H_{n-1}(t))(2t+H_{n+1}(t)-H_n(t))-2t(2nt+H_n(t))}{(2t+H_{n}(t)-H_{n-1}(t))(2t+H_{n+1}(t)-H_n(t))}.
\ee

To proceed, multiplying by $2t$ on both sides of (\ref{id3}) and using (\ref{re2}) to eliminate $\bt_n$, together with the aid of (\ref{r1}), we obtain
\be\label{equa}
(n-r_n(t))(H_{n+1}(t)-H_n(t))=2t\:r_n(t)+4t\:\mathrm{p}(n,t)+(n-r_n(t))(n-1-r_{n-1}(t)).
\ee
From (\ref{re1}) we have
\bea\label{rnr}
r_{n-1}(t)&=&R_{n-1}(t)-2\al-r_n(t)\nonumber\\
&=&H_{n}(t)-H_{n-1}(t)-2\al-r_n(t).
\eea
Substituting (\ref{pnr}) and (\ref{rnr}) into (\ref{equa}), we obtain the following equation
$$
H_n+n H_{n-1}(t)-n H_{n+1}(t)+r_n(t)\big(2t-2\al-r_n(t)+H_{n+1}(t)-H_{n-1}(t)\big)=0.
$$
Inserting (\ref{rne}) into the above equation, we obtain (\ref{hnd}). The initial conditions follow from (\ref{def}) and (\ref{Rnt}).
\end{proof}
In the end, we show the large $n$ asymptotics of the Hankel determinant $D_n(t)$ in the following theorem.
\begin{theorem}\label{thm1}
The ratio $D_n(t)/D_n(0)$ has the following expansion as $n\rightarrow\infty$:
\bea\label{ra}
\frac{D_n(t)}{D_n(0)}&=&\exp\bigg[-\frac{nt}{2}+\frac{t(t+8\al)}{16}+\frac{t(1-4\al^2)}{8n}+\frac{t(1-4\al^2)(t-4\al)}{32n^2}\nonumber\\[10pt]
&+&\frac{t(1-4\al^2)(t^2-6\al t+12\al^2+3)}{96n^3}+O\left(\frac{1}{n^4}\right)\bigg].
\eea
Moreover, the Hankel determinant $D_n(t)$ has the large $n$ asymptotic expansion
\bea\label{dnt}
D_n(t)&=&\frac{\pi^{n+\al+\frac{1}{2}}n^{\al^2-\frac{1}{4}}G^2\big(\frac{1}{2}\big)}{2^{n^2+(2\al-1)(n+\al)}G^2(\al+1)}\times\exp\bigg[-\frac{n t}{2}+\frac{t (t+8 \alpha)}{16}+\frac{(1-4 \alpha ^2) (t-2 \alpha )}{8 n}\nonumber\\[10pt]
&+&\frac{(1-4 \alpha ^2) (6 t^2-24 \alpha  t+28 \alpha ^2-3)}{192 n^2}+\frac{(1-4 \alpha ^2) \big(t^3-6 \alpha  t^2+3(1+4 \alpha ^2) t+3 \alpha(1-4\al^2)\big)}{96 n^3}\nonumber\\[10pt]
&+&O\left(\frac{1}{n^4}\right)\bigg],
\eea
where $G(\cdot)$ is the Barnes $G$-function.
\end{theorem}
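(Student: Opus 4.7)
My plan is to first derive a compact integral representation for $\ln[D_n(t)/D_n(0)]$ in terms of the sub-leading coefficient $\mathrm{p}(n,t)$, then substitute the asymptotic expansion of Theorem \ref{thmp} and integrate term by term, and finally combine the resulting ratio with the large-$n$ asymptotics of $D_n(0)$ obtained from (\ref{bg}). For the first step, combining the definition (\ref{def}), the formula (\ref{Rnb}), and the elementary identity $\sum_{j=0}^{n-1}(2j+1+2\al)=n(n+2\al)$ gives
$$
H_n(t)-n(n+2\al)=-2t\sum_{j=0}^{n-1}\bigl(\bt_j(t)+\bt_{j+1}(t)\bigr),
$$
and the telescoping identity (\ref{sum}) together with $\bt_0=0$ collapses the sum to $-\mathrm{p}(n,t)-\mathrm{p}(n+1,t)$. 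Substituting this into (\ref{hn}) yields the clean formula
$$
\frac{d}{dt}\ln D_n(t)=\mathrm{p}(n,t)+\mathrm{p}(n+1,t),
$$
so that integration from $0$ to $t$ delivers the integral representation $\ln[D_n(t)/D_n(0)]=\int_0^t[\mathrm{p}(n,s)+\mathrm{p}(n+1,s)]\,ds$.

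For the second step, I would substitute the expansion of $\mathrm{p}(n,s)$ from Theorem \ref{thmp} into this integrand, re-expand $\mathrm{p}(n+1,s)$ in powers of $1/n$ via $(n+1)^{-j}=n^{-j}(1+1/n)^{-j}$, and integrate term by term in $s$. Each coefficient $b_j(s)$ is polynomial in $s$, so every integral is elementary; in particular, the constant-in-$s$ contribution $-(2n+1)/4$ from the $b_{-1}n$ pieces of $\mathrm{p}(n,s)$ and $\mathrm{p}(n+1,s)$ integrates to $-(2n+1)t/4$ and combines with the integrated $b_0$ term to produce the leading behaviour $-nt/2+t(t+8\al)/16$, while the successive orders reproduce the remaining coefficients in (\ref{ra}) after exponentiation.

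For the third step, I multiply (\ref{ra}) by the closed form (\ref{bg}) for $D_n(0)$ and apply the Barnes $G$-function asymptotic
$$
\ln G(z+1)=\frac{z^2}{2}\ln z-\frac{3z^2}{4}+\frac{z}{2}\ln(2\pi)-\frac{1}{12}\ln z+\zeta'(-1)+O(z^{-2})
$$
to each of $G(n+1)$, $G(n+1+2\al)$, $G(n+\al+1)$ and $G(2n+2\al+1)$ to sufficient order, and collect. The $O(n^2\ln n)$ and $O(n\ln n)$ contributions must cancel telescopically (consistent with $D_n(0)$ growing only as a power of $n$ times an exponential), leaving the prefactor $\pi^{n+\al+\frac12}n^{\al^2-\frac14}G^2(\tfrac12)/[2^{n^2+(2\al-1)(n+\al)}G^2(\al+1)]$ times the series from (\ref{ra}); the $\pi^n$ factor and the $G^2(\tfrac12)$ constant are most cleanly extracted by applying a duplication formula for the Barnes $G$-function to $G(2n+2\al+1)$. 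The main obstacle lies entirely in this final step: one has to track carefully the cancellation of $\ln n$ terms across four $G$-factors and simplify the surviving constants into the product form displayed in (\ref{dnt}). The formal term-by-term integration in the second step is harmless on the compact interval $[0,t]$ because the asymptotic expansion of $\mathrm{p}(n,s)$ is uniform in $s$ on compact subsets, a fact one can verify a posteriori from the difference equation (\ref{pnd}).
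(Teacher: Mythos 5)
Your proposal is correct and follows essentially the same route as the paper: establish $\frac{d}{dt}\ln D_n(t)=\mathrm{p}(n,t)+\mathrm{p}(n+1,t)$, integrate, insert the expansion of Theorem \ref{thmp}, and combine with the Barnes $G$-function asymptotics of $D_n(0)$ from (\ref{bg}). The only (cosmetic) difference is that you obtain the key identity by telescoping $H_n(t)=\sum_{j=0}^{n-1}R_j(t)$ directly via (\ref{Rnb}) and (\ref{sum}), whereas the paper routes through the intermediate relation $4t\,\mathrm{p}(n,t)=H_n(t)-r_n(t)-n(n-1+2\al)$ together with (\ref{re2}) and (\ref{be1}); both reduce to the same summation argument.
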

\begin{proof}
From (\ref{pnr}) we have
\bea
H_n(t)&=&n(n-1+2\al)+r_n(t)+4t\:\mathrm{p}(n,t)\nonumber\\
&=&n(n+2\al)-2t\bt_n+4t\:\mathrm{p}(n,t)\label{bpe}\\
&=&n(n+2\al)+2t\:\mathrm{p}(n,t)+2t\:\mathrm{p}(n+1,t),\label{hn1}
\eea
where use has been made of (\ref{re2}) and (\ref{be1}). Comparing (\ref{hn1}) with (\ref{hn}) gives us
$$
\frac{d}{dt}\ln D_n(t)=\mathrm{p}(n,t)+\mathrm{p}(n+1,t).
$$
It follows that
$$
\ln \frac{D_n(t)}{D_n(0)}=\int_{0}^{t}\big(\mathrm{p}(n,s)+\mathrm{p}(n+1,s)\big)ds.
$$
Substituting (\ref{pne}) into the above equation and taking a large $n$ limit, we obtain (\ref{ra}).

Next, it is known that the Barnes $G$-function has the following asymptotic expansion \cite[p. 285]{Barnes},
\bea
\log G(z+1)&=&z^2\left(\frac{\log z}{2}-\frac{3}{4}\right)+\frac{z}{2}\log(2\pi)-\frac{\log z}{12}+\frac{1}{12}-\log A-\frac{1}{240 z^2}+\frac{1}{1008 z^4}\nonumber\\
&-&\frac{1}{1440 z^6}+\frac{1}{1056 z^8}+O\left(\frac{1}{z^{10}}\right),\qquad z\rightarrow+\infty,\nonumber
\eea
where $A$ is the Glaisher-Kinkelin constant ($A=1.2824271291...$), and
$$
G\left(\frac{1}{2}\right)=2^{\frac{1}{24}}\pi^{-\frac{1}{4}}\mathrm{e}^{\frac{1}{8}}A^{-\frac{3}{2}}.
$$
Then, we obtain the large $n$ asymptotics of $D_n(0)$ from (\ref{bg}):
\bea\label{dn0}
D_n(0)&=&\frac{\pi^{n+\al+\frac{1}{2}}n^{\al^2-\frac{1}{4}}G^2\big(\frac{1}{2}\big)}{2^{n^2+(2\al-1)(n+\al)}G^2(\al+1)}\times\exp\bigg[-\frac{\al(1-4\alpha ^2)}{4n}+\frac{(1-4\al^2)(28\al^2-3)}{192 n^2}\nonumber\\[10pt]
&+&\frac{\alpha  (1-4 \alpha ^2)^2}{32 n^3}+O\left(\frac{1}{n^4}\right)\bigg].
\eea
The combination of (\ref{ra}) and (\ref{dn0}) gives (\ref{dnt}). The proof is complete.
\end{proof}
\begin{remark}We provide an alternate proof of Theorem \ref{thm1} in the following.
Substituting (\ref{pb}) into (\ref{bpe}), $H_n(t)$ can be expressed in terms of $\bt_n$:
\be\label{hnt}
H_n(t)=n(n+2\al-2t)+4t\bt_n(n+\al+t-t\bt_n-t\bt_{n-1}-t\bt_{n+1}).
\ee
The combination of (\ref{hn}) and (\ref{hnt}) gives
$$
\frac{d}{dt}\ln D_n(t)=2\bt_n(n+\al+t-t\bt_n-t\bt_{n-1}-t\bt_{n+1})-n.
$$
It follows that
\be\label{dnt0}
\ln \frac{D_n(t)}{D_n(0)}=\int_{0}^{t}\left[2\bt_n(s)\big(n+\al+s-s\bt_n(s)-s\bt_{n-1}(s)-s\bt_{n+1}(s)\big)-n\right]ds.
\ee
Substituting the expansion of $\bt_n$ in Theorem \ref{thm} into (\ref{dnt0}), we obtain the same result in Theorem \ref{thm1} by taking a large $n$ limit.
\end{remark}
\begin{remark}
Recently, Charlier and Gharakhloo \cite{CG} studied the large $n$ asymptotics of Hankel determinants of Laguerre and Jacobi type weights with Fisher-Hartwig singularities by using the Riemann-Hilbert approach. Our result in Theorem \ref{thm1} is consistent with \cite[Theorem 1.3]{CG}. We thank the reviewer for pointing this out. However, we compute the higher order terms of $n$ up to $O(n^{-3})$, which are not given in \cite[Theorem 1.3]{CG}. In fact, we can compute the higher order terms of $n$ up to any degree by using our approach.
\end{remark}

\section*{Acknowledgments}
The authors would like to thank the referee for helpful comments which substantially improved the presentation of this paper.
The work of Chao Min was partially supported by the National Natural Science Foundation of China under grant number 12001212, by the Fundamental Research Funds for the Central Universities under grant number ZQN-902 and by the Scientific Research Funds of Huaqiao University under grant number 17BS402. The work of
Yang Chen was partially supported by the Macau Science and Technology Development Fund under grant number FDCT 023/2017/A1 and by the University of Macau under grant number MYRG 2018-00125-FST.

\section*{Conflicts of Interest}
The authors have no conflicts of interest to declare that are relevant to the content of this article.

\section*{Data Availability Statements}
Data sharing not applicable to this article as no datasets were generated or analysed during the current study.


\begin{thebibliography}{}
%
%
\bibitem{Barnes}
{E. W. Barnes}, The theory of the $G$-function, {\em Quart. J. Pure Appl. Math.} {\bf 31} ({1900}) {264--314}.
\bibitem{BCE}
{E. L. Basor}, {Y. Chen} and {T. Ehrhardt}, {Painlev\'{e} V and time-dependent Jacobi polynomials}, {\em J. Phys. A: Math. Theor.} {\bf 43} ({2010}) {015204} {(25pp)}.
\bibitem{Basor2015}
{E. L. Basor}, {Y. Chen} and {N. S. Haq}, {Asymptotics of determinants of Hankel matrices via non-linear difference equations}, {\em J. Approx. Theory} {\bf 198} ({2015}) {63--110}.
\bibitem{BCI}
{A. Bogatskiy}, {T. Claeys} and {A. Its}, {Hankel determinant and orthogonal polynomials for a Gaussian weight with a discontinuity at the edge}, {\em Commun. Math. Phys.} {\bf 347} ({2016}) {127--162}.
\bibitem{Charlier}
{C. Charlier}, {Asymptotics of Hankel determinants with a one-cut regular potential and Fisher-Hartwig singularities}, {\em Int. Math. Res. Not.} {\bf 2019} ({2019}) {7515--7576}.
\bibitem{CD}
{C. Charlier} and {A. Dea$\mathrm{\tilde{n}}$o}, {Asymptotics for Hankel determinants associated to a Hermite weight with a varying discontinuity}, {\em SIGMA} {\bf 14} ({2018}) {018} {(43 pp)}.
\bibitem{CG}
{C. Charlier} and {R. Gharakhloo}, {Asymptotics of Hankel determinants with a Laguerre-type or Jacobi-type potential and Fisher-Hartwig singularities}, {\em Adv. Math.} {\bf 383} ({2021}) {107672} {(69pp)}.
\bibitem{ChenIsmail}
{Y. Chen} and {M. E. H. Ismail}, {Thermodynamic relations of the Hermitian matrix ensembles}, {\em J. Phys. A: Math. Gen.} {\bf 30} ({1997}) {6633--6654}.
\bibitem{ChenIsmail2005}
{Y. Chen} and {M. E. H. Ismail}, {Jacobi polynomials from compatibility conditions}, {\em Proc. Amer. Math. Soc.} {\bf 133} ({2005}) {465--472}.
\bibitem{ChenIts}
{Y. Chen} and {A. Its}, {Painlev\'{e} III and a singular linear statistics in Hermitian random matrix ensembles, I}, {\em J. Approx. Theory} {\bf 162} ({2010}) {270--297}.
\bibitem{Chihara}
{T. S. Chihara}, {\em An introduction to orthogonal polynomials}, {Dover}, {New York}, {1978}.
\bibitem{Clarkson}
{P. A. Clarkson} and {K. Jordaan}, {The relationship between semiclassical Laguerre polynomials and the fourth Painlev\'{e} equation}, {\em Constr. Approx.} {\bf 39} ({2014}) {223--254}.
\bibitem{Clarkson1}
{P. A. Clarkson} and {K. Jordaan}, {Properties of generalized Freud polynomials}, {\em J. Approx. Theory} {\bf 225} ({2018}) {148--175}.
\bibitem{Clarkson2}
{P. A. Clarkson} and {K. Jordaan}, {A generalized sextic Freud weight}, {\em Integral Transform. Spec. Funct.} {\bf 32} ({2021}) {458--482}.
\bibitem{Clarkson3}
{P. A. Clarkson}, {K. Jordaan} and {A. Kelil}, {A generalized Freud weight}, {\em Stud. Appl. Math.} {\bf 136} ({2016}) {288--320}.
\bibitem{Dai}
{D. Dai} and {L. Zhang}, {Painlev\'{e} VI and Hankel determinants for the generalized Jacobi weight}, {\em J. Phys. A: Math. Theor.} {\bf 43} ({2010}) {055207} {(14pp)}.
\bibitem{DIK}
{P. Deift}, {A. Its} and {I. Krasovsky}, {Asymptotics of Toeplitz, Hankel, and Toeplitz+Hankel determinants with Fisher-Hartwig singularities}, {\em Ann. of Math.} {\bf 174} ({2011}) {1243--1299}.
\bibitem{Filipuk}
{G. Filipuk}, {W. Van Assche} and {L. Zhang}, {The recurrence coefficients of semi-classical Laguerre polynomials and the fourth Painlev\'{e} equation}, {\em J. Phys. A: Math. Theor.} {\bf 45} ({2012}) {205201 (13 pp)}.
\bibitem{Gromak}
{V. I. Gromak}, {I. Laine} and {S. Shimomura}, {\em Painlev\'{e} Differential Equations in the Complex Plane}, {Walter de Gruyter}, {Berlin}, {2002}.
\bibitem{Ismail}
{M. E. H. Ismail}, {\em Classical and Quantum Orthogonal Polynomials in One Variable}, {Encyclopedia of Mathematics and its Applications 98}, {Cambridge University Press}, {Cambridge}, {2005}.
\bibitem{Its}
{A. Its} and {I. Krasovsky}, {Hankel determinant and orthogonal polynomials for the Gaussian weight with a jump}, {in: J. Baik et al. (Eds.), Integrable Systems and Random Matrices, Contemp. Math., Vol. 458}, {Amer. Math. Soc.}, {Providence, RI}, {2008}, {pp. 215-247}.
\bibitem{Kuij1}
{A. B. J. Kuijlaars}, {Riemann-Hilbert analysis for orthogonal polynomials, in: E. Koelink, W. Van Assche (Eds.), Orthogonal Polynomials and Special Functions}, {Lecture Notes in Mathematics, Vol. 1817}, {Springer}, {Berlin}, {2003}, {pp. 167--210}.
\bibitem{Kuij}
{A. B. J. Kuijlaars}, {K. T.-R. McLaughlin}, {W. Van Assche} and {M. Vanlessen}, {The Riemann-Hilbert approach to strong asymptotics for orthogonal polynomials on $[-1,1]$}, {\em Adv. Math.} {\bf 188} ({2004}) {337--398}.
\bibitem{Lebedev}
{N. N. Lebedev}, {\em Special Functions and Their Applications}, {Dover}, {New York}, {1972}.
\bibitem{Mehta}
{M. L. Mehta}, {\em Random Matrices}, {3rd edn.}, {Elsevier}, {New York}, {2004}.
\bibitem{Min2019}
{C. Min} and {Y. Chen}, {Painlev\'{e} transcendents and the Hankel determinants generated by a discontinuous Gaussian weight}, {\em Math. Meth. Appl. Sci.} {\bf 42} ({2019}) {301--321}.
\bibitem{Min2020}
{C. Min} and {Y. Chen}, {Painlev\'{e} V, Painlev\'{e} XXXIV and the degenerate Laguerre unitary ensemble}, {\em Random Matrices: Theor. Appl.} {\bf 9} ({2020}) {2050016} {(22 pp)}.
\bibitem{Min2020b}
{C. Min} and {Y. Chen}, {Painlev\'{e} V and the Hankel determinant for a singularly perturbed Jacobi weight}, {\em Nucl. Phys. B} {\bf 961} ({2020}) {115221} {(25 pp)}.
\bibitem{Min2021}
{C. Min} and {Y. Chen}, {Differential, difference, and asymptotic relations for Pollaczek-Jacobi type orthogonal polynomials and their Hankel determinants}, {\em Stud. Appl. Math.} {\bf 147} ({2021}) {390--416}.
\bibitem{MLC}
{C. Min}, {S. Lyu} and {Y. Chen}, {Painlev\'{e} III$'$ and the Hankel determinant generated by a singularly perturbed Gaussian weight}, {\em Nucl. Phys. B} {\bf 936} ({2018}) {169--188}.
\bibitem{Sogo}
{K. Sogo}, {Time-dependent orthogonal polynomials and theory of soliton-applications to matrix model, vertex model and level statistics}, {\em J. Phys. Soc. Japan} {\bf 62} ({1993}) {1887--1894}.
\bibitem{Szego}
{G. Szeg\"{o}}, {\em Orthogonal Polynomials}, {4th edn.}, {Amer. Math. Soc.}, {Providence, RI}, {1975}.
\bibitem{Voros}
{A. Voros}, Spectral functions, special functions and the Selberg zeta function, {\em Commun. Math. Phys.} {\bf 110} ({1987}) {439--465}.
\bibitem{Wu}
{X.-B. Wu}, {S.-X. Xu} and {Y.-Q. Zhao}, {Gaussian unitary ensemble with boundary spectrum singularity and $\sigma$-form of the Painlev\'{e} II equation}, {\em Stud. Appl. Math.} {\bf 140} ({2018}) {221--251}.
\bibitem{Xu2015}
{S.-X. Xu}, {D. Dai} and {Y.-Q. Zhao}, {Painlev\'{e} III asymptotics of Hankel determinants for a singularly perturbed Laguerre weight}, {\em J. Approx. Theory} {\bf 192} ({2015}) {1--18}.
\bibitem{Zeng}
{Z.-Y. Zeng}, {S.-X. Xu} and {Y.-Q. Zhao}, {Painlev\'{e} III asymptotics of Hankel determinants for a perturbed Jacobi weight}, {\em Stud. Appl. Math.} {\bf 135} ({2015}) {347--376}.
\end{thebibliography}
\end{document}